\newtheorem{thm}{Theorem}[section]
\newtheorem{cor}[thm]{Corollary}
\newtheorem{lem}[thm]{Lemma}
\newtheorem{prop}[thm]{Proposition}
\theoremstyle{definition}
\newtheorem{defn}[thm]{Definition}
\theoremstyle{remark}
\newtheorem{rem}[thm]{Remark}
\newcommand{\dblq}{{/\!/}}
\def\cM{\mathcal{M}}
\def\cO{\mathcal{O}}
\def\bP{\mathbb{P}}
\def\cQ{\mathcal{Q}}
\def\cX{\mathcal{X}}
\def\cC{\mathcal{C}}
\def\bZ{\mathbb{Z}}
\def\FF{{\textbf F}}
\def\LL{{\textbf L}}
\def\mm{\overline{\mathcal{M}}}
\def\wt{\widetilde}
\DeclareMathOperator{\CH}{\mathrm{CH}}
\DeclareMathOperator{\Gr}{Gr}
\DeclareMathOperator{\Hom}{Hom}
\DeclareMathOperator{\id}{id}
\DeclareMathOperator{\spine}{sp}
\DeclareMathOperator{\tot}{tot}
\DeclareMathOperator{\Tev}{Tev}
\DeclareMathOperator{\vir}{vir}
\begin{document}
\title{Linear series on general curves with prescribed incidence conditions}
\author[G. Farkas]{Gavril Farkas}

\address{Humboldt-Universit\"at zu Berlin, Institut f\"ur Mathematik,  Unter den Linden 6
\hfill \newline\texttt{}
 \indent 10099 Berlin, Germany} \email{{\tt farkas@math.hu-berlin.de}}

\author[C. Lian]{Carl Lian}

\address{Humboldt-Universit\"at zu Berlin, Institut f\"ur Mathematik,  Unter den Linden 6
\hfill \newline\texttt{}
 \indent 10099 Berlin, Germany} \email{{\tt liancarl@hu-berlin.de}}

\maketitle

\begin{abstract}
Using degeneration and Schubert calculus, we consider the problem of computing the number of linear series of given degree $d$ and dimension $r$ on a general curve of genus $g$ satisfying prescribed incidence conditions at $n$ points. We determine these numbers completely for linear series of arbitrary dimension when $d$ is sufficiently large, and for all  $d$ when either $r=1$ or $n=r+2$. Our formulas generalize and give new proofs of recent results of Tevelev and of Cela-Pandharipande-Schmitt.
\end{abstract}

\section{Introduction}\label{intro}

Having fixed positive integers $r$ and $s$ and setting $g=rs+s$ and $d=rs+r$, in a celebrated paper  \cite{castelnuovo}, Castelnuovo computed the number of linear series  of type $g^r_d$ on a general curve $C$ of genus $g$. By degeneration to a $g$-nodal rational curve, he argued that this number equals the degree of the Grassmannian $\Gr(r+1,d+1)$ in its Pl\"ucker embedding, that is,
$$g!\cdot\frac{1!\cdot 2! \cdot \cdots \cdot r!}{s!\cdot(s+1)! \cdot \cdots \cdot (s+r)!}.
$$
A rigorous modern presentation of Castelnuovo's argument\footnote{The fact that Castelnuovo provided  a plausibility argument rather than a complete proof has been immediately recognized. We quote from the Zentralblatt MATH review \cite{Lo} of \cite{castelnuovo}: \emph{Das Resultat, welches Herr Castelnuovo bekommen hat, gibt mit grosser Wahrscheinlichkeit den wahren Wert,  weil sein Forderungssatz ... sehr leicht angenommen werden kann; doch k\"onnen wir unseren Wunsch nicht unterdr\"ucken, die obige Aufgabe auf einspruchsfreie Weise aufgel\"ost zu sehen.}} was first carried out by Griffiths and Harris \cite{GH80}.
More generally, the theory of limit linear series developed by Eisenbud and Harris \cite{eh_genus0,eh_lls} allows one to compute the number of linear series on a general curve with ramification conditions imposed at fixed marked points, see also \cite{osserman} for a more recent treatment.

\vskip 4pt

Motivated by two recent papers of Tevelev \cite{tevelev} and Cela-Pandharipande-Schmitt \cite{cps}, we consider a variant of this problem, where we impose incidence conditions on the corresponding maps to projective spaces. Let $[C,x_1,\ldots,x_n]\in\cM_{g,n}$ be a general $n$-pointed complex curve of genus $g$. We denote by $G^r_d(C)$ the variety of linear systems $\ell=(L,V)$ of type $g^r_d$ on $C$. A general $\ell\in G^r_d(C)$ corresponds to a regular map $\phi_{\ell}\colon C\rightarrow \mathbb P^r$. Evaluation at the points $x_1, \ldots, x_n$ induces a rational map

\begin{equation}\label{eq:evaluation}
\mathrm{ev}_{(x_1, \ldots, x_n)}\colon G^r_d(C)\dashrightarrow \bigl(\mathbb P^r)^n\dblq PGL(r+1)=:P_r^n,
\end{equation}
to the moduli spaces of $n$ points in $\bP^r$.\footnote{The GIT quotient $\bigl(\mathbb P^r)^n\dblq PGL(r+1)$ depends on a choice of linearization, but our main point of study, the degree of $\mathrm{ev}_{(x_1, \ldots, x_n)}$, is independent of this choice.}
We study the degree $L_{g,r,d}$ of the map $\mathrm{ev}_{(x_1, \ldots, x_n)}$ in the case when this map is generically finite and both spaces have non-negative dimension.  Since $G^r_d(C)$ is a smooth variety of dimension $\rho(g,r,d)=g-(r+1)(g-d+r)$, whereas $\mbox{dim}(P_r^n)=rn-r^2-2r$ as long as $n\geq r+2$, one expects $\mathrm{ev}_{(x_1, \ldots, x_n)}$ to be generically finite precisely when

\begin{equation}\label{eq:n}
n=\frac{dr+d+r-rg}{r}.
\end{equation}

Equivalently, $L_{g,r,d}$ may be understood as the degree of the morphism
\begin{equation*}\label{eq:evaluation_maps}
\tau:\cM_{g,n}(\bP^r,d)\to\cM_{g,n}\times(\bP^r)^n
\end{equation*}
where $\cM_{g,n}(\bP^r,d)$ is the moduli space of degree $d$ maps $f:C\to\bP^r$, with smooth domain and distinct marked points $x_1,\ldots,x_n\in C$, and the map $\tau$ remembers the pointed domain and the images of the $x_i$ under $f$. Again, the map $\tau$ is expected to be generically finite exactly when \eqref{eq:n} holds, and Brill-Noether theory guarantees that this is indeed the case as long as $d>0$.

If $y_1,\ldots,y_n\in\bP^r$ are general points, $L_{g,r,d}$ counts the number of morphisms $f\colon C\to\bP^r$ of degree $d$ satisfying $f(x_i)=y_i$ for $i=1, \ldots, n$. When the points $y_i$ are considered up to projective equivalence, these incidence conditions are intrinsic to $\ell$.  For large $d$, it turns out there is a very simple formula for this degree:

\vskip 4pt

\begin{thm}\label{thm_d_large}
Suppose $d\ge rg+r$, or equivalently $n\ge d+2$. Then
$$L_{g,r,d}=(r+1)^{g}.$$
\end{thm}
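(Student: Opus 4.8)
The plan is to compute $L_{g,r,d}$, in this range of $d$, as an intersection number that manifestly produces the factor $(r+1)^{g}$. The first step is to degenerate the general pointed curve $[C,x_{1},\ldots,x_{n}]$ to a $g$-nodal rational curve $[C_{0},x_{1},\ldots,x_{n}]$, where $C_{0}$ is obtained from $\bP^{1}$ by gluing $g$ general pairs of points $(p_{i},q_{i})$ and all $x_{k}$ are general on the normalization. A degree-$d$ map $C_{0}\to\bP^{r}$ is the same as a degree-$d$ map $f\colon\bP^{1}\to\bP^{r}$ with $f(p_{i})=f(q_{i})$ for $i=1,\ldots,g$, so this identifies $L_{g,r,d}$ with the number of such $f$ that in addition satisfy $f(x_{k})=y_{k}$ for general $y_{k}\in\bP^{r}$. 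The content of this step is that the generically finite morphism $\tau$ extends over this boundary point of $\mm_{g,n}$ with its degree unchanged --- that no solutions degenerate into stable maps with contracted or bubbled components --- and this is exactly where the hypothesis $d\ge rg+r$, equivalently $n\ge d+2$, is needed.

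The resulting genus-$0$ count is then pure Schubert calculus. Since $n\ge d+1$, a binary form of degree $\le d$ is determined by its values at $x_{1},\ldots,x_{n}$; writing $f=[f_{0}:\cdots:f_{r}]$, the incidence conditions $f(x_{k})=y_{k}$ force each $f_{j}$ to be a prescribed linear function of $f_{0}$, so the admissible $f_{0}$ form the projectivization $\bP(W)$ of a linear subspace $W$, and a dimension count via \eqref{eq:n} gives $\dim\bP(W)=rg$. Each node condition $f(p_{i})=f(q_{i})$ is the pullback of the diagonal $\Delta\subset\bP^{r}\times\bP^{r}$ along a pair of linear projections $\bP(W)\dashrightarrow\bP^{r}$, so it cuts out a cycle $Z_{i}\subset\bP(W)$ of class $[Z_{i}]=\sum_{j=0}^{r}H^{j}\cdot H^{r-j}=(r+1)H^{r}$, where $H$ is the hyperplane class. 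Hence
$$L_{g,r,d}=[Z_{1}]\cdots[Z_{g}]=\bigl((r+1)H^{r}\bigr)^{g}=(r+1)^{g}\int_{\bP^{rg}}H^{rg}=(r+1)^{g}.$$

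A variant that avoids the degeneration, and is closer to the intersection theory used elsewhere in the paper, runs directly on the Jacobian: for $d\ge rg+r$ one has $h^{1}(C,L)=0$ for all $L\in\Pic^{d}(C)$, so the space of degree-$d$ maps $C\to\bP^{r}$ is an open subset of the projective bundle $\bP\bigl((\pi_{*}\cP)^{\oplus(r+1)}\bigr)$ over $\Pic^{d}(C)$, for $\cP$ a Poincar\'e line bundle; each condition $f(x_{k})=y_{k}$ pulls back the relative hyperplane class $\xi$, using that $c_{1}(\cP)|_{\Pic^{d}(C)\times\{x_{k}\}}$ is numerically trivial, so that $L_{g,r,d}=\int\xi^{rn}=\int_{\Pic^{d}(C)}s_{g}\bigl((\pi_{*}\cP)^{\oplus(r+1)}\bigr)$, and the classical identity $c(\pi_{*}\cP)=e^{-\theta}$ yields $\int_{\Pic^{d}(C)}(r+1)^{g}\theta^{g}/g!=(r+1)^{g}$. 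In both routes the real obstacle is the same: one must verify that the intersection number computed on the compact parameter space genuinely counts honest maps $C\to\bP^{r}$ of degree exactly $d$, i.e.\ that the locus of maps with a base point, or of smaller degree, or (in the first approach) of degenerate limits, is missed by the finite solution set for general $y_{k}$, and that this solution set is reduced. For $r\ge 2$ that bad locus has codimension at least $1$, so a Kleiman--Bertini general-position argument suffices once $d$ is large enough, and making ``large enough'' precise is the role of the bound $d\ge rg+r$.
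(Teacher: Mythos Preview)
Your main route uses a different degeneration from the paper's: you specialize to a $g$-nodal rational curve (Castelnuovo's original idea), whereas the paper degenerates to a compact-type flag curve---a rational spine with $g$ elliptic tails and one further rational tail carrying all the $x_i$---and uses limit linear series to convert the genus-$g$ count into a genus-$0$ count with an aggregate ramification condition at the attaching node. The endgame is essentially the same Schubert computation: in the paper the $g$ elliptic tails impose the class $\wt\sigma_{1^r}^{\,g}$ on $\bP^{(r+1)(d+1)-1}$, and $\deg(\wt\sigma_{1^r})=r+1$ is read off from the Segre embedding $\bP^1\times\bP^r\hookrightarrow\bP^{2r+1}$; your diagonal pullback produces the very same degree-$(r+1)$ cycle for each node. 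What the flag-curve route buys is that the limit curve is of compact type, so the Eisenbud--Harris machinery applies directly and one never has to reason about stable maps over the boundary.

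The substantive gap is exactly the step you label ``the real obstacle''. You assert that for general $y_k$ no solution degenerates and that the intersection is transverse, gesturing at Kleiman--Bertini, but you do not carry it out; and since your nodal curve is not of compact type, the limit-linear-series smoothing results the paper invokes are unavailable to you---you would need a separate argument (via stable maps, say) that the fibre of $\tau$ over $[C_0]\times(y_1,\ldots,y_n)$ consists only of honest maps from $C_0$ with no bubbles or contractions. The paper proves the analogous genericity and transversality statements as two explicit lemmas: a relative dimension count over the space of incidence data shows that for general $y_i$ no point of the intersection has base points or defines a degenerate map, and this is precisely where the inequality $n\ge d+2$ enters. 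Your outline gives the correct answer and the correct mechanism, but this step is not proved.

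Your Jacobian variant contains a concrete error: the claim that $h^1(C,L)=0$ for all $L\in\Pic^d(C)$ requires $d\ge 2g-1$, but for $r=1$ the hypothesis $d\ge rg+r$ gives only $d\ge g+1$, which fails once $g\ge 3$ (e.g.\ $g=3$, $d=4$, $L=K_C$). The identity $\int s_g\bigl((\pi_*\cP)^{\oplus(r+1)}\bigr)=(r+1)^g$ survives when read virtually, but promoting it to an enumerative count in that range again requires excluding excess loci, which you have not done.
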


\vskip 3pt

We remark that the hypothesis $n\ge d+2$ is automatically satisfied whenever $g\le1$. Indeed, if instead $n\le d+1$ and $g\le1$, then $d+1\ge n=d+1+\frac{d}{r}-g\ge d+\frac{d}{r}$, hence $n\le d+1\le r+1$, a contradiction. On the other hand, we will also see that the inequality $d\ge rg+r$ is sharp in the sense that $L_{g,r,d}=(r+1)^{g}-(d+1) $ when $d=rg$, see Remark \ref{ineq_sharp}.

\vskip 3pt

When $r=1$, the special case $d=g+1$ was studied under the guise of \textit{scattering amplitudes} by Tevelev \cite{tevelev}, who found the strikingly simple formula $L_{g,1,g+1}=2^g$. This raised the possibility, confirmed by Theorem \ref{thm_d_large}, that in the range when $d$ is relatively large, the degree $L_{g,r,d}$ has a simple expression. Using Hurwitz space techniques, Cela-Pandharipande-Schmitt \cite{cps} obtained general formulas for $L_{g,1,d}$, which they called \textit{Tevelev degrees}; in particular, when $d\ge g+1$, they found again $L_{g,1,d}=2^g$.

\vskip 3pt

The result of Theorem \ref{thm_d_large} can be compared to  a certain virtual count of Bertram-Daskalopoulous-Wentworth \cite[Theorem 2.9]{bdw} in the range $d>2g-2$, pre-dating the theory of virtual fundamental classes on moduli spaces of stable maps.

When either $r=1$ or $n=r+2$, or under the hypotheses of Theorem \ref{thm_d_large}, we obtain a more general formula for $L_{g,r,d}$ in terms of Schubert calculus. For a positive integer $a$, we recall the notation $\sigma_a$ for the class of the special Schubert cycle of codimension $a$ consisting of those $(r+1)$-planes $V\in \Gr(r+1,d+1)$ meeting a fixed subspace $W\subseteq \mathbb C^{d+1}$ of dimension $d-a$. We also recall that $\sigma_{1^r}$ denotes the class of the special Schubert cycle of codimension $r$ consisting of those $(r+1)$-planes $V\in \Gr(r+1,d+1)$ whose intersection with a fixed codimension 2 linear subspace $U\subseteq \mathbb C^{d+1}$ has dimension at least $r$. Our main result is as follows:

\begin{thm}\label{thm_r_or_g_small}
Suppose that either:
\begin{itemize}
\item $d\ge rg+r$, (i.e., the same hypothesis as in Theorem \ref{thm_d_large}),
\item $d=r+\frac{rg}{r+1}$ (in which case $n=r+2$), or
\item $r=1$.
\end{itemize}
In each of these cases,
\begin{equation*}
L_{g,r,d}=\int_{\Gr(r+1,d+1)}\sigma_{1^r}^{g}\cdot\left[ \sum_{\alpha_0+\cdots+\alpha_{r}=(r+1)(d-r)-rg}\left(\prod_{i=0}^{r}\sigma_{\alpha_i}\right)\right].
\end{equation*}
\end{thm}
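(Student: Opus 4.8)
The plan is to establish the formula by degenerating the general $n$-pointed curve $[C,x_1,\dots,x_n]$ to a $g$-nodal rational curve $X_0$ — a copy of $\bP^1$ in which $g$ general pairs of points $p_i,q_i$ are identified, with $x_1,\dots,x_n$ placed on $\bP^1$ away from the nodes — and transporting the resulting count onto $\Gr(r+1,d+1)$, which we identify with the Grassmannian of $(r+1)$-dimensional subspaces of $H^0(\bP^1,\cO(d))=\bC^{d+1}$. The first ingredient is the (by now standard; cf.\ \cite{eh_lls,osserman}) description of the relevant limit linear series on $X_0$: a $g^r_d$ on $X_0$ is the same datum as a subspace $V\subseteq\bC^{d+1}$ of dimension $r+1$ such that at each node the functionals $\mathrm{ev}_{p_i}|_V$ and $\mathrm{ev}_{q_i}|_V$ are proportional, equivalently $\dim(V\cap U_i)\ge r$, where $U_i=\Ker\mathrm{ev}_{p_i}\cap\Ker\mathrm{ev}_{q_i}$ has codimension $2$; the gluing parameters, and hence the limit line bundle, are then determined by $V$. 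Thus the parameter space of such series is the Schubert intersection $\bigcap_{i=1}^{g}\sigma_{1^r}(U_i)\subseteq\Gr(r+1,d+1)$, which for general $(p_i,q_i)$ is dimensionally transverse of pure dimension $\rho=\rho(g,r,d)=(r+1)(d-r)-rg$ and of class $\sigma_{1^r}^{\,g}$.

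The next step is to realize the incidence conditions as an intersection number over a projective bundle above this Schubert cycle. A degree-$d$ map $f\colon X_0\to\bP^r$ is the same as an $(r+1)$-tuple of sections $s_0,\dots,s_r\in\bC^{d+1}$ up to a common scalar, i.e.\ a point of $\bP(\Hom(\bC^{r+1},\bC^{d+1}))$ lying over $\bigcap_i\sigma_{1^r}(U_i)$ under the span map $[A]\mapsto\mathrm{im}(A)$. Resolving this span map by the projective bundle $\pi\colon\bP(S^{\oplus(r+1)})\to\Gr(r+1,d+1)$, with $S$ the tautological rank-$(r+1)$ subbundle, one compactifies the parameter space of such $f$ by the restriction of $\bP(S^{\oplus(r+1)})$ over $\bigcap_i\sigma_{1^r}(U_i)$, a space of dimension $\rho+(r+1)^2-1=rn$. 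Each incidence condition $f(x_j)=y_j$, for general $y_j\in\bP^r$, is the pullback of the class of a point of $\bP^r$ under a morphism that is linear in the fibers of $\bP(S^{\oplus(r+1)})$, hence has class $\xi^{\,r}$, where $\xi=c_1(\cO_{\bP(S^{\oplus(r+1)})}(1))$. Imposing all $n$ of them and pushing forward along $\pi$ — using the tautological sequence $0\to S\to\cO^{\,d+1}\to Q\to0$ to rewrite the resulting Segre class of $S^{\oplus(r+1)}$ as $c_\rho(Q^{\oplus(r+1)})$ — gives
$$
L_{g,r,d}=\int_{\bP(S^{\oplus(r+1)})|_{\sigma_{1^r}^{\,g}}}\!\!\xi^{\,rn}
=\int_{\Gr(r+1,d+1)}\sigma_{1^r}^{\,g}\cdot c_\rho\!\left(Q^{\oplus(r+1)}\right)
=\int_{\Gr(r+1,d+1)}\sigma_{1^r}^{\,g}\cdot\!\!\sum_{\alpha_0+\dots+\alpha_r=\rho}\,\prod_{i=0}^{r}\sigma_{\alpha_i},
$$
which is the asserted formula, since $c_a(Q)=\sigma_a$.

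The substantive point — and the reason the theorem is proved only in the three stated cases — is the first equality above: one must show that the degenerate curve $X_0$ genuinely computes $L_{g,r,d}$, with no spurious contributions. Concretely, the family of maps through $[C,x_1,\dots,x_n]$ must specialize, with multiplicity one, to a finite set of honest degree-$d$ maps from $X_0$; equivalently, the $n$ incidence cycles must meet $\bP(S^{\oplus(r+1)})|_{\sigma_{1^r}^{\,g}}$ transversely and only at points parametrizing maps with no contracted components and no extraneous base points (in particular lying over the smooth locus of the Schubert cycle). I expect this transversality/multiplicity-one statement to be the main obstacle, and it is exactly where the three hypotheses are used. When $d\ge rg+r$ all maps in play are nonspecial, the relevant intersection takes place in the locus where the span map is already an isomorphism, and the computation can be carried out directly on $\bP(\Hom(\bC^{r+1},\bC^{d+1}))$, where it collapses to $(r+1)^{g}$ — this is Theorem \ref{thm_d_large}. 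When $n=r+2$ one has $\rho=0$ and $(\bP^r)^{r+2}\dblq PGL(r+1)$ is a single point, so the statement reduces to the classical fact that $X_0$ computes the number $\int_{\Gr(r+1,d+1)}\sigma_{1^r}^{\,g}$ of $g^r_d$'s on $C$. Finally, when $r=1$ one exploits the additional rigidity of pencils, deducing the needed transversality from the Hurwitz-theoretic analysis of Eisenbud--Harris and of Cela--Pandharipande--Schmitt, whose formula for the Tevelev degrees is thereby recovered.
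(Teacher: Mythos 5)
Your intersection-theoretic mechanism is sound and is essentially equivalent to the paper's: your bundle $\bP(S^{\oplus(r+1)})\to\Gr(r+1,d+1)$ is the incidence correspondence $\cX\subseteq\bP^{(r+1)(d+1)-1}\times\Gr(r+1,d+1)$ used in the paper, and your pushforward $\pi_*(\xi^{rn})=s_\rho(S^{\oplus(r+1)})=c_\rho(Q^{\oplus(r+1)})$ reproduces exactly the paper's computation with the cycles $Z_0,\ldots,Z_r$ of class $\sum_{i+j=d-r}\pi_2^*(\sigma_i)\pi_1^*(H^j)$. However, there are two genuine gaps. First, your degeneration is to an irreducible $g$-nodal rational curve, which is not of compact type, so the Eisenbud--Harris theory of limit linear series (in particular the regeneration/smoothing theorem that guarantees the degenerate fibre computes the degree of $\mathrm{ev}$ with the correct multiplicities) does not apply; limits of line bundles on such a curve can be torsion-free sheaves not locally free at the nodes, and limits of the maps can contract components, so the identification of the limiting parameter space with $\bigcap_i\Sigma_{1^r}(U_i)$ and the multiplicity-one claim both need an argument you do not supply. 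The paper sidesteps this by degenerating to a compact-type flag curve (rational spine, $g$ elliptic tails forcing cusps, and a rational tail $R_0$ carrying all marked points), which converts the problem into counting maps $\bP^1\to\bP^r$ with a ramification condition $\overline{\lambda}$ at the attachment node $x_0$, weighted by $\beta_\lambda=\int\sigma_{1^r}^g\sigma_\mu$ (Proposition \ref{degen_formula}).

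Second, and more seriously, the step you yourself flag as ``the main obstacle'' --- that the intersection is supported on honest nondegenerate degree-$d$ maps and is transverse there --- is asserted rather than proved, and this is where essentially all the work in the paper's proof lies (Lemmas \ref{intersection_generic} and \ref{intersection_transverse} and the dimension counts in \S\ref{general_computation}). The danger is concrete: as in Remarks \ref{const_general} and \ref{const_higherrank}, when $d\ge n-1$ the locus of tuples $(f_0,\ldots,f_r)$ vanishing at $x_1,\ldots,x_{n-1}$ and defining a constant (or degenerate) map meets all the imposed conditions and contributes excess components; ruling this out requires the explicit parameter count comparing the total ramification $t$ available at $x_0$ (bounded by $(r'+1)(d-n+1)$ for a degenerate image $\bP^{r'}$) against the requirement $t+(r-r')(d-r)\ge rg$, which yields a contradiction precisely when $r=1$ or $n=r+2$, and fails otherwise. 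Your appeal, in the $r=1$ case, to ``the Hurwitz-theoretic analysis of Cela--Pandharipande--Schmitt'' is not an acceptable substitute: it is at best circular (their formula is what is being re-derived) and in any case does not establish transversality of \emph{this} intersection. The $n=r+2$ case you dispatch correctly (it reduces to Castelnuovo's count), and the $d\ge rg+r$ case is plausible as sketched, but without the degenerate-locus exclusion and the transversality argument (which the paper obtains by exhibiting the intersection as a fibre of a generically unramified map from a smooth incidence variety) the proof is incomplete in exactly the cases that make the theorem nontrivial.
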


In particular, comparing Theorem \ref{thm_d_large} with Theorem \ref{thm_r_or_g_small} when $d\ge rg+r$ yields a non-trivial combinatorial identity\footnote{ A combinatorial proof of this identity has been given by Gillespie-Reimer-Berg \cite{grb} after our paper appeared on arXiv, see \S\ref{tableaux} for a discussion.}.

In the second case, in which $d$ is as small as possible, we have $(r+1)(d-r)=gr$, so the second term is interpreted to be 1, and Theorem \ref{thm_r_or_g_small} recovers Castelnuovo's formula for $s=g/(r+1)$. On the other hand, when $gr>(r+1)(d-r)$ (equivalently, $n<r+2$), the summation is interpreted to be zero, so that $L_{g,r,d}=0$. Indeed, this corresponds to the case $\dim G^{r}_{d}(C)=\dim(P_r^n)<0$, in which we find no such morphisms $f\colon C\to\bP^r$.

The case of intermediate $d$ when $r>1$ is the most subtle, and will be addressed in later work.

\vskip 4pt

For $r=1$, Theorem \ref{thm_r_or_g_small}, via Giambelli's formula, yields the following explicit formulas for $L_{g,1,d}$, the last of which agrees with the results of \cite[Theorem 6]{cps}, see Proposition \ref{prop:explicit} for details.

\begin{align*}
L_{g,1,d}&=\sum_{\alpha_0+\alpha_1=2d-2-g}\int_{\Gr(2,d+1)} \sigma_1^g\cdot \sigma_{\alpha_0}\cdot \sigma_{\alpha_1}\\
&=
\sum_{i=0}^{\bigl \lfloor \frac{2d-g-2}{2}\bigr \rfloor} \frac{(2d-g-2i-1)^2}{g+1} {g+1\choose d-i}\\
&=2^g-2\sum_{i=0}^{g-d-1}\binom{g}{i}+(g-d-1)\binom{g}{g-d}+(d-g-1)\binom{g}{g-d+1}.
\end{align*}
Here, we adopt the convention that $\binom{g}{j}=0$ when $j<0$, so in particular we have again that $L_{g,1,d}=2^g$ when $d\ge g+1$.
\vskip 3pt

To prove Theorems \ref{thm_d_large} and \ref{thm_r_or_g_small}, we proceed via a standard degeneration to a flag curve consisting of a rational spine and $g+1$ tails, one of which being rational and on which all the marked points specialize, the remaining $g$ tails being elliptic curves. We  reduce to a concrete problem in genus zero in \S\ref{degeneration}, and  then handle this problem via Schubert calculus in \S\ref{main_computation}; care needs to be taken to avoid degenerate solutions, particularly excess contributions from constant maps $f\colon \bP^1\to\bP^r$ obtained from linear series with base points at some of of the points $x_i$. Because such excess contributions in our setup persist when the hypotheses of Theorem \ref{thm_r_or_g_small} are not satisfied (see Remark \ref{const_higherrank}), the general computation of $L_{g,r,d}$ remains open.

\vskip 4pt

Our method in the case $r=1$ also allows us to re-compute the more general counts of Cela-Pandharipande-Schmitt \cite{cps}, where some points of the source curve are constrained to have the same image. If $r=1$ and $1\le k\le d,n$, let $L'_{g,d,k}$ be the number of morphisms $f\colon C\to\bP^1$ as before, but where we take $y_1=y_2=\cdots=y_k$, and the $y_i$ otherwise general. (Note that our indexing differs from that of \cite{cps}, where $d$ is written as $g+1+\ell$ for some $\ell\in\bZ$ and $k$ is called $r$, whereas we have reserved the variable $r$ to denote the dimension of the target projective space.) We find:

\begin{thm}\label{thm_cps}
\begin{equation*}
L'_{g,d,k}=\int_{\Gr(2,d+1)}\sigma_1^g\sigma_{k-1}\cdot\left[\sum_{i+j=2(d-1)-g-(k-1)}\sigma_{i}\sigma_j\right]-\int_{\Gr(2,d)}\sigma_1^g\sigma_{k-2}\cdot\left[\sum_{i+j=2(d-2)-g-(k-2)}\sigma_{i}\sigma_j\right].
\end{equation*}
\end{thm}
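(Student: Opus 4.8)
The plan is to run the same degeneration used to prove Theorems \ref{thm_d_large} and \ref{thm_r_or_g_small}. One specializes $[C,x_1,\ldots,x_n]$ to a flag curve consisting of a rational spine, $g$ elliptic tails attached at general points, and a single rational tail $R_0$ on which all of $x_1,\ldots,x_n$ specialize. As in \S\ref{degeneration}, a limit linear series on this curve is controlled by its aspect on the spine, an element of $\Gr(2,d+1)$; each elliptic tail imposes a factor $\sigma_1$, and the tail $R_0$ together with the incidence data at the marked points imposes the remaining Schubert conditions. The only new feature is that the coincidence $y_1=\cdots=y_k$ alters the contribution of $R_0$, since all marked points lie there.

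The next step is to read off this modified contribution. Requiring $f(x_1)=\cdots=f(x_k)$ is equivalent to requiring that the degree-$d$ pencil on $R_0\cong\bP^1$ contain a divisor $\geq x_1+\cdots+x_k$, that is, that it meet the codimension-$k$ subspace $H^0(\bP^1,\cO(d)-x_1-\cdots-x_k)\subseteq H^0(\bP^1,\cO(d))$; after matching through the node this becomes the Schubert condition $\sigma_{k-1}$ on the spine aspect. The remaining incidence conditions at $x_{k+1},\ldots,x_n$, together with the compatibility at the node, then contribute precisely $\sum_{i+j=2(d-1)-g-(k-1)}\sigma_i\sigma_j$, exactly as the analogous sum arises in the proof of Theorem \ref{thm_r_or_g_small} once one has accounted for the $\sigma_1^g$ and the $\sigma_{k-1}$ (the total codimension $2(d-1)-g-(k-1)$ being what remains on $\Gr(2,d+1)$). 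This produces the first integral in the statement as a \emph{naive} count.

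The naive count is an overcount, and the second integral is the correction. As in \S\ref{main_computation}, the spurious contributions come from limit linear series whose aspect on $R_0$ acquires a base point at one of the coinciding points $x_1,\ldots,x_k$: removing such a base point produces a degree-$(d-1)$ pencil for which the condition ``contains a divisor $\geq x_1+\cdots+x_k$'' degenerates to ``contains a divisor through the remaining $k-1$ of the points,'' so that $\sigma_{k-1}$ is replaced by $\sigma_{k-2}$ and $\Gr(2,d+1)$ by $\Gr(2,d)$, while the remaining conditions become the $(g,d-1,k-1)$-version of the naive count. Hence this excess is accounted for by $\int_{\Gr(2,d)}\sigma_1^g\sigma_{k-2}\cdot[\sum_{i+j=2(d-2)-g-(k-2)}\sigma_i\sigma_j]$, and subtracting it yields the formula. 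The cases $k=1$ (the second term vanishes and one recovers $L_{g,1,d}$ of Theorem \ref{thm_r_or_g_small}) and small $d$ provide immediate consistency checks.

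The crux, and the step I expect to be the main obstacle, is the excess analysis of the previous paragraph: one must verify that base points at the points $x_1,\ldots,x_k$ are the only source of overcounting, that no further iterated degenerations contribute separately, and---most delicately---that the correction enters with multiplicity exactly one rather than with a combinatorial factor recording which $x_i$ carries the base point. I expect the cleanest route is to organize the naive count as the cumulative sum $\sum_{j\geq 0}L'_{g,d-j,k-j}$, so that the two-term formula becomes the resulting telescoping identity; establishing this requires precisely the careful bookkeeping of constant maps, base points, and their interaction with the node that is carried out for Theorems \ref{thm_d_large} and \ref{thm_r_or_g_small} in \S\ref{main_computation}.
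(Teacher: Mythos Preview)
Your reduction to the flag curve and the identification of the first integral as a naive count are on target, but the proposal diverges from the paper at the key technical step and, as written, has a genuine gap.

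The paper does \emph{not} impose the coincidence condition directly on $R_0$ with $x_1,\ldots,x_k$ still present. Instead it performs a \emph{further} degeneration, letting $x_1,\ldots,x_k$ coalesce onto a new rational bubble $R_k$ attached to $R_0$ at a single node $x$. Limit-linear-series compatibility across this node forces the $R_0$-aspect to have ramification sequence $(0,k-1)$ at $x$, i.e.\ the Schubert condition $\sigma_{k-1}$ at the \emph{single} point $x$, together with one linear incidence condition $f(x)=y_1$. The excess then arises from a base point at $x$ alone: twisting down replaces $\sigma_{k-1}$ by $\sigma_{k-2}$, drops $d$ by one, and loses the incidence at $x$, yielding exactly the second integral with no multiplicity ambiguity. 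Your worry about ``which $x_i$ carries the base point'' simply does not arise, because after the bubbling there is only one relevant point.

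Your alternative --- imposing the secancy condition $\Lambda\cap H^0(\cO(d)(-x_1-\cdots-x_k))\neq 0$ directly on $R_0$ --- does produce the same class $\sigma_{k-1}$ and hence the same first integral, but the excess analysis is more delicate than you indicate. First, the symmetry concern is a red herring: once you fix the single linear condition $f(x_1)=y_1$, a dimension count shows that only a base point at $x_1$ gives zero-dimensional excess; base points at $x_2,\ldots,x_k$ impose one condition too many. Second, and more seriously, your setup acquires contributions from constant maps with image $y_1=\cdots=y_k$ and base points at $x_{k+1},\ldots,x_n$ whenever $d\le g+k-1$; this is exactly the obstruction noted in Remark~\ref{cps_const}, and the paper states explicitly that the further degeneration is needed to avoid it. Your telescoping heuristic does not address these constant-map loci, and they do not fit into the ``$\sum_{j\ge0}L'_{g,d-j,k-j}$'' picture. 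The missing idea is the second degeneration onto $R_k$.
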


The second term is taken to be zero when $k=1$. Note that $L'_{g,d,1}=L_{g,1,d}$, so Theorem \ref{thm_cps} agrees with Theorem \ref{thm_r_or_g_small} in the case $r=1$. From here, the formulas of \cite[Theorem 6]{cps} can be recovered by recursion, see Corollary \ref{recursion}. We sketch the proof of Theorem \ref{thm_cps} in \S\ref{tev_computation}; a more general statement with detailed proofs appears is given in \cite[\S 6]{celalian}.

Finally, we remark that the degeneration technique also allows one to impose ramification conditions at additional fixed points $p_1,\ldots,p_m\in C$, see \S\ref{ram_conditions}.

\vskip 4pt

\noindent {\bf Relation to other work.}  We discuss  results related to this circle of ideas that appeared after our paper was published  on arXiv.
The count of Theorem \ref{thm_d_large} agrees with a virtual count of maps $C\to\bP^r$ in Gromov-Witten theory as computed by Buch-Pandharipande \cite{buchpand}, the so-called \textit{virtual Tevelev degrees} of $\bP^r$.  Let
$\tau\colon \mm_{g,n}(\bP^r,d)\to\mm_{g,n}\times(\bP^r)^n$
be the map remembering $[(C,x_1,\ldots,x_n)]$ and the points $y_i=f(x_i)$. Then, under assumption \eqref{eq:n}, we have  \cite[\S 1.3]{buchpand}
\begin{equation*}
\tau_{*}([\mm_{g,n}(\bP^r,d)]^{\vir})=(r+1)^g\cdot[\mm_{g,n}\times(\bP^r)^n].
\end{equation*}
for \emph{all} $d$. When $d<rg+r$, the virtual count includes excess contributions we wish to exclude in our counts $L_{g,r,d}$.

More generally, when $\bP^r$ is replaced by an arbitrary target variety $X$, the corresponding virtual degrees are expressed in terms of the quantum cohomology of $X$, see \cite[Theorem 1.3]{buchpand}. It is expected that for all Fano varieties $X$, the virtual count of maps of sufficiently large degree is enumerative, as in Theorem \ref{thm_d_large}, see \cite{lianpand} for partial results in this direction.
\vskip 4pt

{\small{\noindent {{\bf{Acknowledgments:}}} This problem was brought to our attention by R. Pandharipande. We thank him as well as A. Cela and J. Schmitt for interesting discussions related to this circle of ideas. We also thank L. Manivel and M. Gillespie for pointing out an error in \S\ref{tableaux} in an earlier version and communicating alternate proofs of Proposition 4.1, and B. Gunby for help with combinatorial manipulation. Finally, we thank the anonymous referee for numerous improvements.

Farkas has been supported by the DFG Grant \emph{Syzygien und Moduli} and by the ERC Advanced Grant SYZYGY. This project has received funding from the European Research Council (ERC) under the European Union Horizon 2020 research and innovation program (grant agreement No. 834172).}
Lian was supported by an NSF Postdoctoral Fellowship, grant DMS-2001976.}

\section{Reduction to genus zero}\label{degeneration}

In this section, we reduce the enumerative problem to genus zero via a standard limit linear series degeneration, see for example \cite{osserman,lian_pencils}. We begin by recalling some notation, while assuming throughout some familiarity with basics of the theory of limit linear series \cite{eh_lls}.

\vskip 3pt

We recall the usual notation for Schubert cycles in the Grassmannian $\Gr(r+1,d+1):=\Gr(r+1,V)$, where $V$ is an $(d+1)$-dimensional vector space, following \cite[\S 4]{eh3264}. For a nonincreasing sequence $\mu:=(\mu_0\geq \mu_1\geq \cdots \geq \mu_r)$ and a flag $\FF: V=V_{d+1}\supset V_d\supset \ldots \supset V_1\supset V_0=0$, we introduce the Schubert cycle
$$\Sigma_{\mu}=\Sigma_{\mu}(\FF):=\Bigl\{\Lambda \in \Gr(r+1,d+1): \mbox{dim } \bigl(\Lambda \cap V_{d-r+1+i-\mu_i}\bigr)\geq i+1, \ \mbox{ for } i=0,\ldots, r\Bigr\}.$$
Note that $\mbox{codim}\bigl(\sigma_{\mu}, \Gr(r+1,d+1)\bigr)=|\mu|=\mu_0+\cdots+\mu_r$. If $\mu=(1,\ldots, 1, 0)=:1^r$, in projective terms  $\Sigma_{1^r}$ consists of $r$-dimensional subspaces  $L=\mathbb P(\Lambda)\subset \mathbb P(V)\cong \mathbb P^d$ intersecting a fixed codimension $2$ subspace along an $r$-dimensional locus. We set $\sigma_{\mu}:=[\Sigma_{\mu}]\in \CH^{|\mu|}\bigl(\Gr(r+1,g+1)\bigr)$.

\vskip 4pt

For a smooth curve $C$ and a linear series $\ell=(L,V)\in G^r_d(C)$, we denote by $$\alpha^{\ell}(p):=\bigl(0\leq \alpha_0^{\ell}(p)\leq \alpha_1^{\ell}(p)\leq \cdots \leq \alpha_r^{\ell}(p)\leq d-r\bigr)$$
the \emph{ramification sequence} at a point $p\in C$. Keeping with the tradition of \cite{GH80} or \cite{eh_lls}, we write ramification indices of linear series nondecreasingly, whereas indices indexing Schubert cycles are written nonincreasingly. We formalize this practice as follows:

\begin{defn}\label{schindex}
For any partition $\mu=(\mu_0\geq \mu_1\geq \cdots \geq \mu_r)$, denote by $\overline{\mu}$ the tuple of components of $\mu$ in reverse (increasing) order, that is, $\overline{\mu}=(\mu_r,\ldots,\mu_0)$.
\end{defn}

\vskip 3pt

We introduce the proper  stack of limit linear series of type $g^r_d$
$$\sigma \colon \widetilde{\mathcal{G}}^r_d\rightarrow \mathcal{M}_g^{\mathrm{ct}}$$
over the moduli space $\cM_g^{\mathrm{ct}}$ of curves of compact type.   For a curve $C$ of compact type, we denote by $\overline{G}^r_d(C)$ the variety of limit linear series on $C$. For pairwise distinct smooth points $p_1, \ldots, p_n\in C_{\mathrm{reg}}$ and Schubert indices  $\alpha^i=\bigl(0\leq \alpha_0^i\leq \cdots \leq \alpha_r^i\leq d-r\bigr)$, where $i=1, \ldots, n$, we set
$$\overline{G}^r_d\Bigl(C, (p_1, \alpha^1), \ldots, (p_n, \alpha^n)\Bigr):=\bigl\{\ell\in \overline{G}^r_d(C): \alpha^{\ell}(p_i)\geq \alpha^i, \mbox{ for } i=1, \ldots, n\bigr\},$$
viewed as a generalized degeneracy locus of expected dimension
\begin{equation}\label{adjBN}
\rho(g,r,d, \alpha^1, \ldots, \ldots, \alpha^n):=g-(r+1)(g-d+r)-\sum_{i=1}^n \sum_{j=0}^r \alpha_j^i.
\end{equation}

\vskip 4pt

We now consider a degeneration to the following flag curve of genus $g$, already considered in \cite{eh_lls}. Let $[C_0, x_1, \ldots, x_n]$ be the $n$-pointed genus $g$ curve of compact type consisting of a rational spine $R_{\spine}$ to which general elliptic tails $E_1,\ldots,E_g$ are attached at general points $p_1, \ldots, p_g\in R_{\spine}$ respectively, and a further rational component $R_0$, also attached at a general point $x_0$ of $R_{\spine}$. The marked points $x_1,\ldots,x_n$ specialize to general points of $R_0$.

\vskip 4pt

Let $\cC\rightarrow (B, b_0)$ be the versal deformation space of $[C_0, x_1, \ldots, x_n]$ and denote by $\tau_1, \ldots, \tau_n \colon  B\rightarrow \cC$ the sections corresponding to the $n$ marked points. We may assume that each point of $B$ parametrizes an $n$-pointed curve of genus $g$ of compact type. We further consider the induced moduli map
$B\rightarrow \cM_g^{\mathrm{ct}}$ forgetting the markings and let
$$\sigma_B\colon \widetilde{\mathcal{G}}^r_d/B:=\widetilde{\mathcal{G}}^r_d\times _{\cM_g^{\mathrm{ct}}} B\rightarrow B$$
be the corresponding family of limit linear series and consider the evaluation map
\begin{equation}\label{evmap}
\mathrm{ev}\colon \widetilde{\mathcal{G}}^r_d/B\dashrightarrow B\times P_r^n, \ \
\bigl(C_b, \ell \bigr)\mapsto \Bigl(b, \Bigl(\phi_{\ell}(\tau_1(b)), \ldots, \phi_{\ell}(\tau_n(b))\Bigr)\Bigr),
\end{equation}
where $\phi_{\ell}$ denotes the rational map to $\mathbb P^r$ corresponding to the aspect of the limit linear series $\ell$ on the component of $C_b$ on which all the marked points $\tau_1(b), \ldots, \tau_n(b)$ lie.

\vskip 4pt

Using \cite[Theorem 1.1]{EH2} it follows that $\widetilde{\mathcal{G}}^r_d/B$ is smooth of dimension $3g-3+n+\rho(g,r,d)$ over $B$ and every limit linear series on $C_0$ smooths to a linear series on a neighboring smooth curve. It follows that $\mbox{deg}(\mathrm{ev})=L_{d,g,r}$. We will determine this degree by looking at the scheme-theoretic fibre $\mathrm{ev}^{-1}(b_0, y_1, \ldots, y_n)$, where $y_1, \ldots, y_n$ are general points in $\bP^r$ considered up to projective equivalence. We will show in Lemma \ref{intersection_generic} that every point  $[C_0, \ell]\in \mathrm{ev}^{-1}(b_0, y_1, \ldots, y_n)$ corresponds to a limit linear series $\ell\in \overline{G}^r_d(C_0)$ which is base point free at each point $x_1, \ldots, x_n$. In particular, $\mathrm{ev}^{-1}(b_0, y_1, \ldots, y_n)$ is disjoint from the indeterminacy locus of the map $\mathrm{ev}$ defined in (\ref{evmap}).

\vskip 4pt

To that end, we wish to count limit linear series $\ell$ on $C_0$ of degree $d$ and rank $r$, subject to the condition that, after twisting down base points on the $R_0$-aspect, the points $x_1,\ldots,x_n$ have prescribed images in $\bP^r$. For a limit linear series $\ell$ on $C_0$, we denote by $\ell_{R_0}, \ell_{R_{\spine}}$ and $\ell_{E_i}$ its corresponding aspects. By the additivity of the Brill-Noether number for $\ell$ encoded in the very definition of a limit linear series, we have the following inequality
$$\rho(g,r,d)\geq \rho\Bigl(\ell_{R_0}, \alpha^{\ell_{R_0}}(x_0)\Bigr) + \rho\Bigl(\ell_{R_{\mathrm{sp}}}, \alpha^{\ell_{R_{\spine}}}(x_0), \alpha^{\ell_{R_{\mathrm{sp}}}}(p_1),\ldots, \alpha^{\ell_{R_{\spine}}}(p_g)\Bigr)+
\sum_{i=1}^g \rho\Bigl(\ell_{E_i}, \alpha^{\ell_{E_i}}(p_i)\Bigr).
$$

Since over the curve $[C_0, x_1, \ldots, x_n]$ the map $\mathrm{ev}$ evaluates the $R_0$-aspect of each limit linear series, it follows that
we must only consider the components of $\overline{G}^r_d(C_0)$ in which $\ell_{R_0}$ varies in a family of dimension $\rho(g,r, d)=\dim P_r^n$. This happens when the remaining aspects of $\ell$ satisfy $\rho\bigl(\ell_{E_i}, \alpha^{\ell_{E_i}}(p_i))=0$ for $i=1, \ldots, g$ and $\rho\Bigl(\ell_{R_{\spine}}, \alpha^{\ell_{R_{\spine}}}(x_0), \alpha^{\ell_{R_{\spine}}}(p_1),\ldots, \alpha^{\ell_{R_{\mathrm{sp}}}}(p_g)\Bigr)=0$.

This implies that on each elliptic tail $E_i$, the ramification sequence at the node $p_i$ must be equal to  $(d-r-1, \ldots, d-r-1,d-r)$. Indeed, we need $\alpha^{\ell_{E_i}}_r(p_i)=d-r$, or else
\begin{equation*}
\alpha^{\ell_{E_i}}_0(p_i)+\cdots+\alpha^{\ell_{E_i}}_r(p_i)\le r(d-r-1),
\end{equation*}
but also $\alpha^{\ell_{E_i}}_{r-1}(p_i)\le d-r-1$, or else $E_i$ would carry a linear series of rank 1 and degree 1. We therefore have a unique choice of the $E_i$-aspect, precisely $\ell_{E_i}=(d-r-1)p_i+\bigl|(r+1)p_i\bigr|$, for $i=1, \ldots, g$. By compatibility of the aspects of limit linear series, we find that $\alpha^{\ell_{R_{\spine}}}(p_i)=(0, 1, \ldots, 1)$ for $i=1, \ldots, g$, that is, $\ell_{R_{\spine}}$ has a simple cusp at each of the points $p_i, \ldots, p_g$.

From here, on $R_{\spine}$, the ramification sequence of $\ell$ at the point $x_0$ $$\left(\alpha^{\ell_{R_{\spine}}}_{0}(x_0),\ldots,\alpha^{\ell_{R_{\spine}}}_{r}(x_0)\right)$$
must satisfy the equality
$$\sum_{j=0}^{r}\alpha^{\ell_{R_{\spine}}}_{j}(x_0)=(r+1)(d-r)-rg,$$
whereas the $R_0$-aspect of $\ell$ satisfies
\begin{equation}\label{R0asp}
\sum_{j=0}^r \alpha_j^{\ell_{R_0}}(x_0)=rg.
\end{equation}

Let $\mu=(\mu_0\geq \cdots \geq \mu_r):=\overline{\alpha^{\ell_{R_{\spine}}}_{r}(x_0)}$, that is, we write the partition $\left(\alpha^{\ell_{R_{\spine}}}_{r}(x_0),\ldots,\alpha^{\ell_{R_{\spine}}}_{0}(x_0)\right)$, where the ramification indices are given in descending order, and let $\lambda$ be the complement of $\mu$ in $(d-r)^{r+1}$, that is, $\lambda_j=d-r-\mu_j$, for $j=0,1,\ldots,r$. Summarizing the discussion so far, for each limit linear series $\ell$ on $C_0$ contributing towards the degree of the map $\mathrm{ev}$ one has

\begin{equation}\label{R0ram}
\alpha^{\ell_{R_0}}(x_0)=\overline{\lambda}
\end{equation}.

The number of possible aspects $\ell_{R_{\spine}}$ on $R_{\spine}$ with ramification sequence $\overline{\mu}$ at $x_0$ and cusps at $p_1, \ldots, p_g$ is given by
\begin{equation*}
\beta_{\lambda}:=\int_{\Gr(r+1,d+1)}\sigma_{1^r}^g\cdot \sigma_{\mu}.
\end{equation*}

The transversality of the intersection follows from \cite{eh_genus0}, see also \cite{mtv}, that is, for a general choice of the points $p_1, \ldots, p_g$ and $x_0$, one has precisely $\beta_{\lambda}$ distinct linear series on $R_{\spine}$ with these property. Since $\sigma_{\lambda'}\cdot \sigma_{\mu}=0$, for any Schubert index $\lambda'\neq \lambda$ with $|\lambda'|=rg$, whereas $\sigma_{\lambda}\cdot \sigma_{\mu}=1$, we can write
\begin{equation}\label{sigma1_power}
\sigma_{1^r}^g=\sum_{|\lambda|=rg}\beta_{\lambda}\cdot \sigma_{\lambda}\in \CH^{g}\bigl(\Gr(r+1,d+1)\bigr).
\end{equation}

\begin{defn}\label{ls_count_lambda}
Given a partition $\lambda=\bigl(\lambda_0\geq \cdots \geq \lambda_r)$ with $|\lambda|=rg$ and general points $y_1, \ldots, y_n\in \bP^r$, we define $L_{g,r,d,\lambda}$ to be be the number of maps $f\colon \bP^1\to\bP^r$ of degree $d-\lambda_r$ sending $x_i$ to $y_i$ for $i=1, \ldots, n$ and with ramification sequence given by $\overline{\lambda}$ at $x_0$.
\end{defn}

Such maps are obtained by twisting the $R_0$-aspect of each limit linear series $\ell$ on $C_0$ by the order $\lambda_r$ of its base point $x_0$. Our degeneration shows:

\begin{prop}\label{degen_formula} For a general $n$-pointed curve $[C,x_1, \ldots, x_n]$ of genus $g$, the degree of the map $\mathrm{ev}_{(x_1, \ldots, x_n)}$ is given by the formula
\begin{equation*}
L_{g,r,d}=\sum_{|\lambda|=rg}\beta_{\lambda}L_{g,r,d,\lambda}.
\end{equation*}
\end{prop}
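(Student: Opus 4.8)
Recall from the discussion above that $\deg(\mathrm{ev})=L_{g,r,d}$. The plan is to compute this degree by enumerating the points of the scheme-theoretic fibre $\mathrm{ev}^{-1}(b_0,y_1,\ldots,y_n)$ over a general point of $B\times P_r^n$. Since $\widetilde{\mathcal G}^r_d/B$ is smooth over $B$ (again by \cite[Theorem 1.1]{EH2}) and $B$ is smooth, this fibre is finite, and by Lemma \ref{intersection_generic} it is disjoint from the indeterminacy locus of $\mathrm{ev}$; hence each of its points is an honest limit linear series $\ell$ on $C_0$ whose $R_0$-aspect is base point free at $x_1,\ldots,x_n$ with $\phi_\ell(x_i)=y_i$. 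The next step is to partition this fibre according to the discrete invariant of $\ell$, namely the partition $\lambda$ of size $rg$ determined by $\overline\lambda=\alpha^{\ell_{R_0}}(x_0)$ (equivalently $\lambda_j=d-r-\mu_j$ with $\overline\mu=\alpha^{\ell_{R_{\spine}}}(x_0)$), and to count the points with each fixed value of $\lambda$ separately.

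For a fixed such $\lambda$, the discussion preceding the statement already shows that the only $\ell$ which contribute are those whose Brill--Noether number is entirely concentrated on the $R_0$-aspect. This pins down $\ell_{E_i}=(d-r-1)p_i+\bigl|(r+1)p_i\bigr|$ on each elliptic tail, forces $\ell_{R_{\spine}}$ to have a simple cusp at every $p_i$ and ramification $\overline\mu$ at $x_0$, and forces $\alpha^{\ell_{R_0}}(x_0)=\overline\lambda$. By the transversality of the relevant Schubert intersection on $R_{\spine}$ from \cite{eh_genus0} (see also \cite{mtv}), for our general choice of $p_1,\ldots,p_g,x_0$ there are exactly $\beta_\lambda=\int_{\Gr(r+1,d+1)}\sigma_{1^r}^g\cdot\sigma_\mu$ aspects $\ell_{R_{\spine}}$ of this type, each occurring with multiplicity one; and, as recorded just after Definition \ref{ls_count_lambda}, twisting the $R_0$-aspect down by its base point of order $\lambda_r$ at $x_0$ identifies the admissible $R_0$-aspects with the set of maps $f\colon\bP^1\to\bP^r$ of degree $d-\lambda_r$ with $f(x_i)=y_i$ counted by $L_{g,r,d,\lambda}$. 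The step that makes the total count multiplicative is that, once $\lambda$ is fixed, the choices of $\ell_{R_{\spine}}$ and of $\ell_{R_0}$ are independent: the dimension count forces each node-compatibility inequality $\alpha^{\ell_{R_{\spine}}}_j(x_0)+\alpha^{\ell_{R_0}}_{r-j}(x_0)\le d-r$ to be an equality, so the two ramification sequences at $x_0$ determine one another and gluing imposes no further relation between the aspects, while compatibility of $\ell_{R_{\spine}}$ with each $\ell_{E_i}$ at $p_i$ holds by construction. Thus the $\ell$ with invariant $\lambda$ are in bijection with the product of the two sets, of cardinality $\beta_\lambda\cdot L_{g,r,d,\lambda}$, and summing over all partitions $\lambda$ with $|\lambda|=rg$ gives the asserted formula.

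The point I expect to require the most care is knowing that the fibre $\mathrm{ev}^{-1}(b_0,y_1,\ldots,y_n)$ is \emph{exactly} the disjoint union of the product strata just described: that it meets no point of the indeterminacy locus of $\mathrm{ev}$, and that no degenerate contribution --- a map constant on $\bP^1$, or a linear series acquiring extra base points at some $x_i$ --- sneaks in. This is precisely the role of Lemma \ref{intersection_generic} and of the genericity of $y_1,\ldots,y_n$. Granting that, reducedness of the fibre (so that its cardinality is $\deg(\mathrm{ev})$) follows from generic smoothness of $\mathrm{ev}$ in characteristic zero, using the smoothness of $\widetilde{\mathcal G}^r_d/B$ over $B$, together with the multiplicity-one transversality statement of \cite{eh_genus0} underlying $\beta_\lambda$; no separate excess-intersection analysis is then needed in this genus-reduction step, only in the genus-zero problem defining $L_{g,r,d,\lambda}$ taken up later.
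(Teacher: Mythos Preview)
Your proof is correct and follows essentially the same approach as the paper's: both compute $\deg(\mathrm{ev})$ by analyzing the fibre $\mathrm{ev}^{-1}(b_0,y_1,\ldots,y_n)$, stratify it by the partition $\lambda$ determined by $\alpha^{\ell_{R_0}}(x_0)$, and use the preceding discussion to identify $\beta_\lambda$ choices on $R_{\spine}$ and $L_{g,r,d,\lambda}$ choices on $R_0$, with the forward reference to Lemma~\ref{intersection_generic} ruling out degenerate contributions. Your write-up is in fact more explicit than the paper's on why the count is multiplicative (the node-compatibility inequalities being forced to equalities) and on the role of transversality; one small imprecision is the appeal to ``generic smoothness of $\mathrm{ev}$'' to justify reducedness of the fibre over $(b_0,y_1,\ldots,y_n)$, since $b_0$ is a \emph{special} point of $B$ --- the actual justification, which you also cite, is the transversality of the Schubert intersection from \cite{eh_genus0} together with the later transversality on $R_0$.
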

\begin{proof}
We have already explained that $L_{g,r,d}$ is the degree of the map $\mbox{ev}\colon \widetilde{\mathcal{G}}^r_d/B\dashrightarrow B\times P_r^n$. Having fixed general points $y_1, \ldots, y_n\in \mathbb P^r$, the fibre over $(b_0, y_1, \ldots, y_n)\in B\times P_r^n$ of the map $\mathrm{ev}$
is then \emph{scheme-theoretically} isomorphic to the variety of limit linear series $\ell\in \overline{G}^r_d(C_0)$, whose $R_0$-aspect maps the marked points $x_i$ to $y_i$ for $i=1, \ldots, n$. From the discussion above it follows that $\overline{G}^r_d\bigl(C_0)$ contains $\beta_{\lambda}$ components all isomorphic to  the variety $G^r_d\bigl(R_0, (x_0, \overline{\lambda})\bigr)$; the remaining components of $\overline{G}^r_d(C_0)$ do not contribute to the degree of $\mathrm{ev}$. Finally, observe that $L_{g,r,d,\lambda}$ is precisely the contribution to the degree of the map $\mathrm{ev}$ corresponding to the component $G^r_d\bigl(R_0, (x_0, \overline{\lambda})\bigr)$.
\end{proof}


\section{Counting linear series with assigned incidences on $\bP^1$}\label{main_computation}

Having reduced both Theorem \ref{thm_d_large} and \ref{thm_r_or_g_small} to a question on rational curves, we use Schubert calculus to complete their proofs.

Let us first sketch the argument. The set of maps $\bP^1\to\bP^r$ counted by the number $L_{g,r,d,\lambda}$ naturally sits inside the projective space $\bP^{(r+1)(d+1)-1}$ parametrizing morphisms $f=[f_0, \ldots, f_r]$ of degree $d$, as given by the intersection of the conditions
\begin{enumerate}
\item[(i)] $f(x_i)=y_i$ for $i=1, \ldots,n$,
\item[(ii)] $f$ has ramification at least $\overline{\lambda}$ at $x_0$.
\end{enumerate}
The conditions $f(x_i)=y_i$ cut out linear subspaces, while, upon summing over all $\lambda$ with the multiplicities $\beta_\lambda$, the ramification conditions at $x_0$ cut out an intersection of $g$ subvarieties of degree $r+1$. The expected degree of the intersection is therefore $(r+1)^g$, and we show in the proof of Theorem \ref{thm_d_large} that this intersection is indeed transverse when $d\ge rg+r$.

In general, however, the intersection described above has many excess components. Under the conditions of Theorem \ref{thm_r_or_g_small}, we remove these excess contributions by passing to a certain incidence correspondence dominating $\bP^{(r+1)(d+1)-1}$ to compute $L_{g,r,d}$.

\subsection{Proof of Theorem \ref{thm_d_large}}

For a complex polynomial $u=a_0+\cdots +a_dt^d$ we denote by $c(u)$ the column vector of its coefficients. Let $\bP^{(r+1)(d+1)-1}$ be the projective space parametrizing $(r+1)$-tuples $(f_0,\ldots,f_r)$ of polynomials of degree $d$ in one variable viewed as sections of $\cO_{\bP^1}(d)$, up to simultaneous scaling, and not all zero. When not all polynomials $f_i$ are zero and have no common zeroes, they define a map $f=[f_0, \ldots, f_r]$ of degree $d$ from $\bP^1$ to $\bP^r$.

We introduce the map
\begin{equation}\label{eq:pi}
\pi\colon \bP^{(r+1)(d+1)-1}\dashrightarrow \Gr(r+1,d+1)
\end{equation}
remembering the linear series spanned by $f_0, \ldots, f_r$, whenever they are linearly independent. The indeterminacy locus of this map is irreducible of codimension $d-r+1$, for an $(r+1)$-tuple of polynomials $(f_0, \ldots, f_r)$ lies in the indeterminacy locus of $\pi$ if an only if the $(r+1)\times (d+1)$-matrix of coefficients $\bigl(c(f_0), \ldots, c(f_r)\bigr)$ has rank at most $r$.

\vskip 4pt

For a Schubert variety $\Sigma_\lambda=\Sigma_{\lambda}(\FF)\subseteq \Gr(r+1,d+1)$ of codimension at most $rg$ in $\Gr(r+1,d+1)$, let $\wt{\Sigma}_{\lambda}:=(\pi_1)_*\bigl(\pi_2^*(\Sigma_{\lambda})\bigr)$ be the closure of its pullback under $\bP^{(r+1)(d+1)-1}$. Because the codimension of $\Sigma_{\lambda}$ is lower that that of the indeterminacy locus of $\pi$ (by our assumption $d-r+1>rg$), the cycle $\wt{\Sigma}_{\lambda}$ has the expected codimension of $|\lambda|=rg$ and defines a well-defined class $\wt{\sigma}_{\lambda}\in \CH^{rg}\bigl(\bP^{(r+1)(d+1)-1}\bigr)$. Using (\ref{sigma1_power}) we have the formula
\begin{equation*}
\wt{\sigma}_{1^r}^{g}=\sum_{|\lambda|=rg}\beta_{\lambda}\wt{\sigma}_{\lambda}.
\end{equation*}

\vskip 3pt

Recall that we have fixed $n$ general points $y_1, \ldots, y_n\in \bP^r$. The condition on maps $f\colon \bP^1\to\bP^r$ that $f(x_i)=y_i$ for $i=1, \ldots, n$ impose $nr$ linear conditions on the matrix of coefficients $\bigl(c(f_0), \ldots, c(f_r)\bigr)$. Observe that this condition is automatically satisfied for those $i$ for which $x_i$ is a base point of $f$. The points $y_1, \ldots, y_n$ having been chosen to be general, these linear conditions are independent. Since $(r+1)(d+1)-1-nr=rg$, the conditions $f(x_i)=y_i$ give rise to a linear subspace

$$\LL\cong\bP^{rg}\subseteq \bP^{(r+1)(d+1)-1}.$$

Now, let ${\Sigma}_{\lambda}(x_0)=G^r_d\bigl(\bP^1, (x_0, \overline{\lambda}\bigr))$ be the Schubert variety of $\Gr(r+1,d+1)$ parametrizing linear series  on $\bP^1$ with ramification sequence at least $\overline{\lambda}$ at $x_0$. We wish to intersect its pullback $\wt{\Sigma}_{\lambda}(x_0)$ with $\LL$ on $\bP^{(r+1)(d+1)-1}$. We call a point $[f_0,\ldots,f_r]$ in this intersection \textit{generic} if $\langle f_0,\ldots,f_r\rangle$ is a linear series of rank $r$ with ramification sequence exactly $\overline{\lambda}$, and which defines a (non-degenerate) morphism $f\colon \bP^1\to\bP^r$ after twisting down the base points at $x_0$ with $f(x_i)=y_i$ (in particular, $\langle f_0,\ldots,f_r\rangle$ has no base points away from $x_0$).

\begin{rem}\label{const_general}
We have already seen above that the condition $d\ge rg+r$ ensures that the classes $\wt{\sigma}_{1^r}^g$ and $\wt{\sigma}_{\lambda}$ live in codimension strictly smaller than that of the indeterminacy locus of $\pi$. However, as we will see in Lemma \ref{intersection_generic}, the same condition $d\geq rg+r$ also ensures that $\LL$ contains no points corresponding to degenerate maps $f\colon \bP^1\to\bP^r$. In fact, this is already evident in the case of constant maps; indeed, suppose instead that $d\ge n-1$. Then, we may take the non-zero polynomials $f_0,\ldots,f_r$ to vanish at $x_1,\ldots, x_{n-1}$, and after twisting away all base-points, the resulting map $f\colon \bP^1\to\bP^r$ to be the constant map with image $y_n$. Then $f=[f_0,\ldots, f_r]$ lies on the one hand in $\LL$, and on the other hand in the indeterminacy locus of $\pi$.
\end{rem}

\begin{lem}\label{intersection_generic}
The intersection points of $\wt{\Sigma}_{\lambda}(x_0)$ with $\LL$ are generic in the previous sense. In particular, the intersection occurs away from the indeterminacy locus of $\pi$.
\end{lem}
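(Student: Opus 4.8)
The plan is to take an arbitrary point $p=[f_0,\dots,f_r]$ of $\wt{\Sigma}_{\lambda}(x_0)\cap\LL$, strip its base points, and control a few numerical invariants. Writing $h=\gcd(f_0,\dots,f_r)$, $e:=d-\deg h$, and $\rho+1:=\dim\langle f_0,\dots,f_r\rangle$, the tuple $\phi:=[f_0/h,\dots,f_r/h]$ is an honest degree-$e$ morphism $\bP^1\to\bP^r$ whose image spans a linearly embedded $\bP^{\rho}\subseteq\bP^r$, so in particular $e\ge\rho$. I also record $c_0:=\operatorname{ord}_{x_0}h$, $c':=\deg h-c_0$, and $s:=\#\{i:x_i\text{ is a base point of }\langle f_0,\dots,f_r\rangle\}$; since $x_1,\dots,x_n$ are distinct and different from $x_0$, one has $s\le c'$. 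The only input from $\LL$ is that $p\in\LL$ forces $\phi(x_i)=y_i$ whenever $x_i$ is not a base point. In these terms ``generic'' is equivalent to the conjunction of: $\rho=r$ (equivalently, $p$ avoids the indeterminacy locus of $\pi$, so that $\ell:=\pi(p)$ is a genuine $g^r_d$); $c'=0$; and $\alpha^{\ell}(x_0)=\overline{\lambda}$.

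I would first show $\rho=r$ — in fact that $\LL$ is disjoint from the indeterminacy locus of $\pi$, which at once yields the ``In particular'' clause. If $\rho<r$, then every $y_i$ with $x_i$ not a base point lies on the image of $\phi$, hence in the \emph{proper} subspace $\bP^{\rho}\subsetneq\bP^r$; but $n-s\ge n-c'\ge n-\deg h=n-(d-e)\ge(d+2)-(d-e)=e+2\ge\rho+2$ (using $s\le c'\le\deg h$, the hypothesis $n\ge d+2$, and $e\ge\rho$), so at least $\rho+2$ of the general points $y_i$ would lie in a common $\bP^{\rho}$, which is impossible for points in linearly general position. This is the one place the hypothesis $d\ge rg+r$ enters, and it is exactly the degeneration phenomenon flagged in Remark~\ref{const_general}. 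Once $\rho=r$, the series $\ell:=\pi(p)$ lies in $\Sigma_{\lambda}(x_0)$, so $\alpha^{\ell}(x_0)\ge\overline{\lambda}$ componentwise and $\sum_j\alpha^{\ell}_j(x_0)\ge|\lambda|=rg$; moreover the base-point-free $g^r_e$ underlying $\ell$ has ramification sequence $\beta:=\big(\alpha^{\ell}_j(x_0)-c_0\big)_j$ at $x_0$.

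The remaining two assertions come out of a single dimension count. The space $R$ of degree-$e$ morphisms $\bP^1\to\bP^r$ with ramification $\ge\beta$ at $x_0$ has dimension $(r+1)(e+1)-1-|\beta|$, and since $\phi\in R$ sends the $n-s$ points $x_i$ that are not base points to the general points $y_i$, the evaluation map $R\to(\bP^r)^{n-s}$ is dominant, forcing $(r+1)(e+1)-1-|\beta|\ge r(n-s)$. Substituting $e=d-c_0-c'$, the bound $|\beta|=\sum_j\alpha^{\ell}_j(x_0)-(r+1)c_0\ge rg-(r+1)c_0$, the bound $s\le c'$, and the identity $rn=(r+1)d+r-rg$ coming from~\eqref{eq:n}, the $c_0$-dependence cancels and the inequality collapses to $(r+1)c'\le rs\le rc'$, whence $c'=0$ and then $s=0$. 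Feeding $c'=s=0$ back into $(r+1)(e+1)-1-|\beta|\ge rn$, now keeping $|\beta|$ exactly, gives $\sum_j\alpha^{\ell}_j(x_0)\le rg$; together with $\sum_j\alpha^{\ell}_j(x_0)\ge rg$ and the componentwise bound $\alpha^{\ell}(x_0)\ge\overline{\lambda}$ (whose right side also sums to $rg$), every inequality is an equality, giving $\alpha^{\ell}(x_0)=\overline{\lambda}$ and finishing the proof that $p$ is generic. The step I expect to be the crux is the first one: degenerate maps must be excluded by the crude linear-general-position argument alone, because it is precisely the resulting absence of base points that makes the $c_0$-terms cancel in the second step, on which everything else rests.
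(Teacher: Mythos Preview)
Your argument is correct and rests on the same two ingredients as the paper's proof: linearly general position of the $y_i$ to exclude degenerate maps, and a parameter count to exclude unwanted base points and excess ramification. The organization, however, differs in useful ways. The paper works relatively over the open set $V\subseteq(\bP^r)^n$ of point configurations and shows that the non-generic locus in $\LL\cap\wt{\Sigma}_\lambda(x_0)$ has dimension strictly less than $\dim V$, treating the non-degenerate and degenerate cases separately; in the degenerate case it first derives the necessary bound $n-k\le r'+1$ and then combines it with a second dimension inequality $(d-k+1)(r'+1)-1\ge r'(n-k)$ to reach a contradiction. You instead fix a single general $(y_1,\dots,y_n)$ and argue pointwise: the inequality chain $n-s\ge n-\deg h\ge e+2\ge\rho+2$ dispatches the degenerate case in one stroke (and in fact shows the stronger statement that all of $\LL$, not just the intersection, avoids the indeterminacy locus), and a single dominance inequality $(r+1)(e+1)-1-|\beta|\ge r(n-s)$ then handles base points and ramification simultaneously, with the $c_0$-terms cancelling automatically. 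One point you leave implicit is that ``dominant because it hits a general point'' requires the $y_i$ to be chosen outside the (non-dense) images of the evaluation maps for \emph{every} choice of $(e,\beta,S)$; since there are only finitely many such triples, this is harmless, but it is worth stating.
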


\begin{proof}
We construct the locus $\LL$ ``relatively,'' allowing the points $y_1,\ldots,y_n$ to vary, and show that, for dimension reasons, the locus where $\LL\cap \wt{\Sigma}_{\lambda}(x_0)$ contains non-generic points cannot dominate the space of choices of the $y_i$. In particular, if the $y_i$ are chosen to be general, we obtain the desired conclusion.

More precisely, let $V\subseteq(\bP^r)^n$ be the open subset of collections of points $y_1,\ldots,y_n\in\bP^r$ where the $y_i$ are in linearly general position, that is, no $m$ of the $y_i$ lie on a linear space of dimension $m-2$ if $2\le m\le r+1$. Consider the product $V\times\bP^{(r+1)(d+1)-1}$, where the second factor parametrizes maps $f=[f_0,\ldots, f_r]$ as before, and the closed subscheme $V\times\wt{\Sigma}_{\lambda}(x_0)$ of the expected dimension $rg$ as defined above. We then define the locus, abusively denoted $\LL\subseteq V\times\bP^{(r+1)(d+1)-1}$, of maps $f$ satisfying $f(x_i)=y_i$ for $i=1,2,\ldots,n$, by relativizing the above construction.

We have a forgetful map $\psi\colon \LL\cap\wt{\Sigma}_{\lambda}(x_0) \to V$, and wish to show that the locus of non-generic points of source does not dominate $V$; to do so, we show that the locus of non-generic points has dimension strictly less than that of $V$.

\vskip 4pt

First, consider the locus on $\LL\cap\wt{\Sigma}_{\lambda}(x_0)$ of non-generic $f=[f_0, \ldots, f_r]\in \LL\cap\wt{\Sigma}_{\lambda}(x_0)$ away from the indeterminacy locus of $\pi$. Suppose that $f$ has base-points of total order $k$ away from $x_0,\ldots, x_n$ and order $k'$ on $x_1,\ldots,x_n$, and that $k+k'>0$. We see upon twisting down by these base-points that the locus of such $f$ has the expected codimension $(r+1)(k+k')$ in $\wt{\Sigma}_{\lambda}(x_0)$, and the incidence conditions $f(x_i)=y_i$ impose at least $(n-k')r$ additional conditions inside $V\times \wt{\Sigma}_{\lambda}(x_0)$. In total, we find that the locus of possible $f$ has codimension strictly greater than $rg+rn$ in $V\times \wt{\Sigma}_{\lambda}(x_0)$, and therefore cannot dominate $V$. Similarly, a parameter count shows that $f$ cannot have ramification sequence strictly more than $\overline{\lambda}$ at $x_0$.

\vskip 4pt


Consider now a point of $\LL\cap\wt{\Sigma}_{\lambda}(x_0)$ for which $\mbox{dim} \langle f_0, \ldots, f_r\rangle\leq r$. We show again by counting parameters that no such $f$ can exist. By twisting away base points at $x_0$ (which decreases the number of moduli and the number of conditions by the same amount), we may assume that $f$ is base point free at $x_0$.  We may also assume that $f$ has no base points away from $x_1,\ldots, x_n$. Suppose now that $f$ has $k$ (simple) base points among these $x_i$, we label them as $x_{n-k+1}, \ldots, x_n$; we twist down our linear series to have degree $d-k$, and lose the corresponding $k$ linear conditions.  Note that in this case the ramification condition at $x_0$ can no longer be imposed in terms of $f_0, \ldots, f_r$ alone, since by assumption, the resulting map $f\colon \bP^1\to\bP^r$ is degenerate, that is, the corresponding linear series has dimension $r'<r$. Note, however, that if the remaining $y_i$ do not themselves live in a linear subspace of $\bP^r$ of dimension $r'$, then this is impossible; we therefore need $n-k\le r'+1$.

Then, it must be true that if $x_1,\ldots,x_{n-k}$ are general points of $\bP^1$, there exists a map $f\colon \bP^1\to\bP^{r'}$ of degree $d-k$ with $f(x_i)=y_i$ for $i=1,\ldots, n-k$. Therefore, we have
\begin{equation*}
(d-k+1)(r'+1)-1\ge r'(n-k).
\end{equation*}
Rearranging yields
\begin{equation*}
k\le d-r'(n-d-1).
\end{equation*}
On the other hand, because $n-k\le r'+1$, we find
\begin{equation*}
(d-n+1)\ge r'(n-d-2).
\end{equation*}
However, by assumption, we have $n\ge d+2$ and $r'\ge0$, so we have reached a contradiction.
\end{proof}

\begin{lem}\label{intersection_transverse}
For a general choice of the points $x_1, \ldots, x_n\in \bP^1$ and $y_1, \ldots, y_n\in \bP^r$, the intersection of $\wt{\Sigma}_{\lambda}(x_0)$ and $\LL$ is transverse.
\end{lem}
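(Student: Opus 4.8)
The plan is to exhibit $\LL\cap\wt\Sigma_\lambda(x_0)$ as a general fibre of a morphism whose source is \emph{smooth} along the fibres we care about, and then to conclude by generic smoothness in characteristic zero. We keep $x_0,x_1,\ldots,x_n\in\bP^1$ fixed and general, as required for Lemma \ref{intersection_generic}, and we let $\underline y=(y_1,\ldots,y_n)$ vary. Recall that, for general $\underline y$, Lemma \ref{intersection_generic} guarantees that every point $[f]\in\LL\cap\wt\Sigma_\lambda(x_0)$ is generic; in particular $[f]$ lies in the locus $U\subseteq\bP^{(r+1)(d+1)-1}$ where $\pi$ is defined (the span of $f_0,\ldots,f_r$ has dimension $r+1$), none of $x_1,\ldots,x_n$ is a base point of $f$, and $\pi([f])$ lies in the open Schubert cell of $\Sigma_\lambda(x_0)$ on which the ramification at $x_0$ equals exactly $\overline\lambda$. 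Since on $U$ one has $\wt\Sigma_\lambda(x_0)=\pi^{-1}(\Sigma_\lambda(x_0))$, with $\pi$ a submersion and the open Schubert cell smooth, $\wt\Sigma_\lambda(x_0)$ is smooth at $[f]$, of dimension $(r+1)(d+1)-1-rg$.

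Following the relativisation used in Lemma \ref{intersection_generic}, let $V\subseteq(\bP^r)^n$ be the open locus of points in linearly general position, form the universal incidence locus $\LL\subseteq V\times\bP^{(r+1)(d+1)-1}$ together with $V\times\wt\Sigma_\lambda(x_0)$, and set $\Xi:=\LL\cap\bigl(V\times\wt\Sigma_\lambda(x_0)\bigr)$ with its first projection $\psi\colon\Xi\to V$; the scheme-theoretic fibre $\psi^{-1}(\underline y)$ is precisely the scheme-theoretic intersection $\LL\cap\wt\Sigma_\lambda(x_0)$ for that value of $\underline y$. The crucial local observation is that, near a point of $\Xi$ lying over a general $\underline y$, the scheme $\Xi$ is simply the graph of the evaluation morphism $[f]\mapsto\bigl(f(x_1),\ldots,f(x_n)\bigr)$ restricted to $\wt\Sigma_\lambda(x_0)$: because $x_i$ is not a base point of $f$, the condition $f(x_i)=y_i$ expresses $y_i$ as a regular function of $[f]$ in a suitable affine chart, so there $\LL$ is literally the graph of this morphism, and hence $\Xi$ is isomorphic, as a scheme, to an open subset of $\wt\Sigma_\lambda(x_0)$. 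In particular $\Xi$ is smooth of dimension $(r+1)(d+1)-1-rg=rn=\dim V$ at every point lying over a general $\underline y$.

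It follows that the union $\Xi^{\circ}$ of the irreducible components of $\Xi$ dominating $V$ is a reduced variety of pure dimension $\dim V$, that $\Xi$ coincides with $\Xi^{\circ}$ near every point over a general $\underline y$, and that $\psi$ restricts to a morphism $\Xi^{\circ}\to V$. If this restriction is not dominant, i.e. if $L_{g,r,d,\lambda}=0$, then $\LL\cap\wt\Sigma_\lambda(x_0)=\varnothing$ for general $\underline y$ and there is nothing to prove; so assume it is dominant. By generic smoothness in characteristic zero there is a dense open $V^{\circ}\subseteq V$, which we may shrink so that Lemma \ref{intersection_generic} also applies over it, such that $\psi|_{\Xi^{\circ}}$ is smooth over $V^{\circ}$. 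For $\underline y\in V^{\circ}$ every point of $\LL\cap\wt\Sigma_\lambda(x_0)$ is generic, hence, by the smoothness of $\Xi$ there, lies on a single component of $\Xi$, which belongs to $\Xi^{\circ}$; thus $\LL\cap\wt\Sigma_\lambda(x_0)=(\Xi^{\circ})_{\underline y}$ is a reduced, finite scheme, each of whose points lies in the smooth locus of $\wt\Sigma_\lambda(x_0)$. At such a point $p$, since $\wt\Sigma_\lambda(x_0)$ is smooth of dimension $(r+1)(d+1)-1-rg$ and $\LL\cong\bP^{rg}$ is smooth of the complementary dimension $rg$, while the scheme-theoretic intersection is reduced and $0$-dimensional, we get $T_p\LL\cap T_p\wt\Sigma_\lambda(x_0)=T_p\bigl(\LL\cap\wt\Sigma_\lambda(x_0)\bigr)=0$, so $T_p\LL\oplus T_p\wt\Sigma_\lambda(x_0)=T_p\bP^{(r+1)(d+1)-1}$ and the intersection is transverse at $p$.

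The main obstacle in carrying this out is ensuring that the universal incidence locus $\Xi$ is reduced of the expected dimension along the fibres of interest, so that generic smoothness may legitimately be applied; this is exactly what the description of $\Xi$ as the graph of an evaluation morphism over (an open subset of) the smooth Schubert cell accomplishes, and it is here that the base-point-freeness at the $x_i$ furnished by Lemma \ref{intersection_generic} is essential, since it is what allows one to solve for the $y_i$ in terms of $[f]$.
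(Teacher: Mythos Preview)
Your argument is correct and follows essentially the same route as the paper: both construct a smooth total space over the parameter space of target data by restricting to the open Schubert cell (where $\wt\Sigma_\lambda(x_0)$ is the smooth pullback of a smooth variety under the submersion $\pi$), identify the fibres of the evaluation map with the intersections $\LL\cap\wt\Sigma_\lambda(x_0)$, and invoke generic smoothness in characteristic zero. The only cosmetic difference is that the paper lets the source points $x_i$ vary as well (mapping to $(\bP^1)^{n+1}\times(\bP^r)^n$), whereas you fix them and vary only the $y_i$; your graph description of $\Xi$ makes the smoothness of the total space more explicit than the paper does, but the content is the same.
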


\begin{proof}
Let $\cM_{n,d,r}$ be the open subscheme of the space $\Hom_d(\bP^1,\bP^r)\times(\bP^1)^n$ parametrizing elements $([f\colon \bP^1\to\bP^r],x_1,\ldots,x_n)$, where $f$ is a non-degenerate morphism of degree $d$ and the $x_i$ are pairwise distinct points that in addition are distinct from a fixed point $x_0\in\bP^1$.

One may construct $\cM_{n,d,r}$ as an open subset of a $(\bP^1)^{n}$-bundle over $\bP^{(r+1)(d+1)-1}$. We have a smooth, regular map $\chi \colon \cM_{n,d,r}\to\Gr(r+1,d+1)$, from which we can pull back the smooth, open Schubert cycle of linear series with ramification exactly $\overline{\lambda}$ at $x_0$ to obtain the smooth subscheme $Y_{n,d,r}$ parametrizing the morphisms we wish to count. Finally, the projection $\phi\colon Y_{n,d,r}\to(\bP^1)^{n+1}\times(\bP^r)^n$ remembering the marked points and their images on the source is generically unramified of finite degree.

By construction, any non-zero tangent vector to the intersection $\wt{\Sigma}_{\lambda}(x_0)$ and $\LL$ in the generic locus yields a non-zero relative tangent vector of $\phi$. Thus, when the points $x_i, y_i$ are general, there are no such tangent vectors, and the intersection is transverse.
\end{proof}

\vskip 4pt

We are now in a position to complete the proof of Theorem \ref{thm_d_large}.

\begin{proof}[Proof of Theorem \ref{thm_d_large}]
By the above discussion summarized in Proposition \ref{degen_formula}, it suffices to intersect $nr$ linear conditions with $\wt{\sigma}_{1^r}^{g}$ on $\bP^{(r+1)(d+1)-1}$ and compute the degree, that is

$$L_{g,r,d}=\sum_{|\lambda|=rg} \beta_{\lambda} \LL\cdot \widetilde{\sigma}_{\lambda}=\LL\cdot \widetilde{\sigma}_{1^r}^g=\mathrm{deg}\bigl(\widetilde{\sigma}_{1^r}^g\bigr)=\mathrm{deg}\bigl(\sigma_{1^r}^g\bigr),$$
where the last two degree are computed on $\Gr(r+1,d+1)$ and on $\bP^{(r+1)(d+1)-1}$ respectively.

Theorem \ref{thm_d_large} then follows from the fact that the degree of $\wt{\sigma}_{1^r}$ is $r+1$. To see this, note that on $\Gr(r+1,d+1)$, the Schubert cycle $\Sigma_{1^r}$ is the locus of $(r+1)$-planes intersecting a fixed codimension 2 subspace $P\subseteq H^0(\bP^1,\cO(d))$ in a subspace of dimension at least $r$. Identifying $\bP^{(r+1)(d+1)-1}$ with the space of $(r+1)\times(d+1)$ matrices, whose entries are taken up to simultaneous scaling, the pullback of $\Sigma_{1^r}$ may be identified with the determinantal locus of matrices such that the $(r+1)\times 2$ submatrix formed by the first two columns has rank 1. This, in turn, is the pullback under linear projection from $\bP^{(r+1)(d+1)-1}$ of the Segre embedding $\bP^{1}\times \bP^{r}\to\bP^{2r+1}$. Denoting by $h_1$ and $h_2$ the pullbacks to $\bP^1\times \bP^{r+1}$ of the hyperplane classes of $\bP^1$ and $\bP^{r+1}$, observe that
$$\mbox{deg}\bigl(\bP^1\times \bP^{r}\bigr)=(h_1+h_2)^{r+1}={r+1\choose 1}h_1 h_2^r=r+1.$$  This completes the proof.
\end{proof}

\begin{rem}\label{ineq_sharp}
The inequality $d\ge rg+r$ in Theorem \ref{thm_d_large} is sharp. Indeed, the largest possible value of $d$ outside of this range is $d=rg$, corresponding to $d=n-1$. In this case, following the proof of Theorem \ref{thm_d_large} shows that our intersection of cycles inside $\bP^{(r+1)(d+1)-1}$ contains an additional zero-dimensional locus of constant maps $[f_0, \ldots, f_r]$ where each $f_i$ is a constant multiple of the degree $d=n-1$ polynomial vanishing at all of the points $x_1,\ldots,x_n$ except one, $x_i$, and the image of $f$ is the point $y_i$. There is one such map for each of the marked points $x_i$, so we find that $L_{g,r,d}=(r+1)^g-n=(r+1)^g-(d+1)$.
\end{rem}

\begin{rem}It is interesting to observe that on a smooth curve $C$ of genus $g$, a general stable vector bundle $E$ of rank $r+1$ and degree $d$ has precisely $(r+1)^g$ line subbundles of maximal degree $d'$, where $d-(r+1)d'=r(g-1)$, see \cite{OT} or \cite{Ox}. The reinterpretation of the numbers $L_{g,r,d}$ from this point of view will be pursued elsewhere.
\end{rem}

%
%

\subsection{Proof of Theorem \ref{thm_r_or_g_small}}\label{general_computation}

We recast the calculation of the previous section in the following light: we consider the incidence correspondence on $\bP^{(r+1)(d+1)-1}\times\Gr(r+1,d+1)$ of $(r+1)$-tuples of degree $d$ polynomials, spanning a $r+1$-dimensional subspace of
$H^0(\bP^1, \cO_{\bP^1}(d))$, then pull back Schubert cycle conditions on the Grassmannian side and linear conditions on the projective space side. This incidence correspondence is defined by pulling back the diagonal under the map

$$(\pi,\id)\colon \bP^{(r+1)(d+1)-1}\times\Gr(r+1,d+1)\dashrightarrow\Gr(r+1,d+1)\times\Gr(r+1,d+1),$$ and the condition $d\ge rg+r$ is needed in order to prevent the indeterminacy locus from being too large. In this section, we obtain formulas for $L_{g,r,d}$ in the cases $r=1$ and $g\ge1$ by shrinking this base locus.

\vskip 3pt

More precisely, for $j=0,1,\ldots,r$, let $\rho_{j}\colon \bP^{(r+1)(d+1)-1}\dashrightarrow\bP^{d}$ be the linear projection remembering $f_j\in H^0(\cO_{\bP^1}(d))$, where we recall that $(f_0,\ldots,f_r)$ is the $(r+1)$-tuple of polynomials whose coefficients are parametrized by $\bP^{(r+1)(d+1)-1}$. We now consider the following incidence correspondence:
$$\xymatrix{
  & Z:=\Bigl\{\bigl([u], \Lambda\bigr)\in \bP^d\times\Gr(r+1,d+1): u\in \Lambda\Bigr\} \ar[dl]_{\pi_1} \ar[dr]^{\pi_2} & \\
   \bP^d & & \Gr(r+1,d+1)       \\
                 }$$
If $\cQ$ denotes the rank $d-r$ tautological quotient bundle on $\Gr(r+1,d+1)$, then $Z$ can be realized as the degeneracy locus of the composition
\begin{equation*}
\pi_1^*\bigl(\cO_{\bP^d}(-1)\bigr) \longrightarrow \cO_{\bP^d\times \Gr(r+1,d+1)}^{d+1}\longrightarrow \pi_2^*\bigl(\cQ\bigr),
\end{equation*}
and thus has class
\begin{equation*}
\Bigl\{c\bigl(\pi_2^*\cQ\bigr)\cdot c\bigl(\pi_1^*\cO_{\bP^d}(1)\bigr)\Bigr\}_{d-r}=\sum_{i+j=d-r}\pi_2^*\bigl(\sigma_i\bigr)\cdot \pi_1^*\bigl(H^j\bigr)\in \CH^{d-r}\bigl(\bP^d\times\Gr(r+1,d+1)\bigr),
\end{equation*}
where $H$ is the hyperplane class on $\bP^d$ and where we have also used that $c_j(\cQ)=\sigma_j$.

Because the codimension of the base locus of $\rho_j$ is $d+1>d-r$, the closure
$$Z_j:=\bigl(\rho_j\times \mathrm{id}_{\Gr(r+1,d+1)}\bigr)^{-1}(Z)$$ of the pullback of the correspondence $Z$ has the same class, that is,
$\sum_{i+j=d-r}  \pi_2^*\bigl(\sigma_i\bigr)\cdot \pi_1^*\bigl(H^j\bigr)$, where this time
$\pi_2\colon \bP^{(r+1)(d+1)-1}\times \Gr(r+1,d+1)\rightarrow \Gr(r+1,d+1)$ denotes the second projection.

\vskip 5pt

\begin{proof}[Proof of Theorem \ref{thm_r_or_g_small}]
We wish to compute the intersection inside $\bP^{(r+1)(d+1)-1}\times \Gr(r+1,d+1)$ of the $nr$ linear conditions pulled back from $\bP^{(r+1)(d+1)-1}$ given by the equations $f(x_i)=y_i$ for $i=1, \ldots, n$,  the pullback under $\pi_2$ of the Schubert cycles $\sigma_\lambda$, where $\lambda$ is a Schubert index with $|\lambda|=rg$, and the classes of the cycles $Z_0,\ldots, Z_r$ defined above. We proceed as in Lemmas \ref{intersection_generic} and \ref{intersection_transverse}.

\vskip 4pt

First we introduce the incidence correspondence:
\begin{equation*}
\xymatrix
{
  & \cX :=\left\{\left([f_0, \ldots, f_r], \Lambda\right)\in \bP^{(r+1)(d+1)-1}\times\Gr(r+1,d+1): f_i\in \Lambda \right\} \ar[dl]_{\pi_1} \ar[dr]^{\pi_2} & \\
   \bP^{(r+1)(d+1)-1} & & \Gr(r+1,d+1)
                 }
\end{equation*}

We first claim that the intersection $\pi_2^*(\sigma_{\lambda})\cdot \pi_1^*(\LL)$ in question is supported away from the locus of $(f,\Lambda)\in \cX$, where $f$ defines a degenerate map $f\colon \bP^{1}\to\bP^{r'}$, for some $r'<r$. When $d\ge rg+r$, the same proof as in Lemma \ref{intersection_generic} applies.

Suppose that either $r=1$ or $n=d+2$, there is such a $(f,\Lambda)\in Z$ in our intersection, and that $k$ of the points $x_1,\ldots,x_n$ are base-points of the $r'$-dimensional linear system $\Lambda_f$ spanned by $f_0,\ldots,f_r$. As in the proof of Lemma \ref{intersection_generic}, it must be the case that $k\ge n-r'-1$.

\vskip 4pt

Denote the total ramification of $\Lambda_f$ at $x_0$ by $t$. Then,
\begin{align*}
t&\le \dim\Gr(r'+1,d-k+1)\\
&\le \dim\Gr(r'+1,d-n+r'+2)\\
&=(r'+1)(d-n+1).
\end{align*}

Thus,
\begin{align*}
d&\ge \frac{t}{r'+1}+n-1\\
&=\frac{t}{r'+1}+d+\frac{d}{r}-g,
\end{align*}
whence
\begin{equation*}
d\le rg-\frac{rt}{r'+1}.
\end{equation*}

On the other hand, we require that $\Lambda\in\sigma_\lambda$, where $|\lambda|=rg$, and $\Lambda_f\subseteq  \Lambda$. Such a $\Lambda$ can only exist if
\begin{equation*}
t+(r-r')(d-r)\ge rg,
\end{equation*}
as $\dim(\Lambda/\Lambda_f)=r-r'$, and each dimension can contribute at most $d-r$ to the ramification of $\Lambda$ at $x_0$. Because $r>r'$, we obtain
\begin{align*}
t+(r-r')\left(rg-\frac{rt}{r'+1}-r\right)&\ge rg\\
t\left(1-\frac{(r-r')r}{r'+1}\right)+(r-r')(rg-r)&\ge rg.
\end{align*}

When $r=1$, and thus $r'=0$, we obtain a contradiction. It remains to consider the case $n=r+2$, in which case
\begin{equation*}
rg=(d-r)(r+1).
\end{equation*}
Then, comparing the inequalities
\begin{align*}
t&\le (r'+1)(d-n+1)=(r'+1)(d-r-1)\\
t&\ge rg-(r-r')(d-r)=(d-r)(r+1)-(r-r')(d-r)
\end{align*}
also yields a contradiction.

\vskip 4pt

Therefore, we are back in the situation of Lemma \ref{intersection_generic} in which all intersection points occur where $f$ is non-degenerate, and in particular, $(f,\Lambda)$ lies away from the indeterminacy of the $\rho_j$. The same parameter counts show that $f$ indeed defines a map $f\colon \bP^1\to\bP^1$ of degree $d$ with vanishing exactly $\overline{\lambda}$ at $x_0$.

\vskip 4pt

Furthermore, the intersection in question is transverse by exactly the same argument as in Lemma \ref{intersection_transverse}, so it suffices to compute the degree of the intersection cycle on $\bP^{(r+1)(d+1)-1}\times \Gr(r+1,d+1)$.  This equals
$$
\int_{\bP^{(r+1)(d+1)-1}\times\Gr(r+1,d+1)}\pi_2^*\bigl(\sigma_{1^r}^g\bigr) \cdot \pi_1^*\bigl(H^{nr}\bigr) \left(\sum_{i+j=d-r}\pi_2^*\bigl(\sigma_i\bigr)\cdot \pi_1^*\bigl(H^j\bigr)\right)^{r+1}$$
$$=\int_{\Gr(r+1,d+1)}\sigma_{1^r}^g\cdot\left[ \sum_{\alpha_0+\cdots+\alpha_{r}=(r+1)(d-r)-rg}\left(\prod_{i=0}^{r}\sigma_{\alpha_i}\right)\right],
$$
as desired.
\end{proof}

\begin{rem}\label{const_higherrank}
While the proof of Theorem \ref{thm_r_or_g_small} shows that our refined incidence correspondence avoids the constant maps of Remark \ref{const_general} when $r=1$ or $n=r+2$, this is not the case in general. Indeed, suppose that $r\ge2$, $n\ge r+3$, and $d\ge n-1$. Then, take $f_0,\ldots,f_r$ to have simple zeroes at $x_1,\ldots,x_{n-1}$ and an order $d-(n-1)$ zero at $x_0$, such that $f=[f_0, \ldots, f_r]$ defines the constant map with image $y_n\in\bP^r$. If $\lambda$ is a Schubert index with $|\lambda|=rg$, then the condition that $f_0,\ldots,f_r\in\Lambda$, where $\Lambda\in\Sigma_\lambda(x_0)\subseteq \Gr(r+1,d+1)$, may be satisfied as long as $rg\le (d-n+1)+\dim \Gr(r,d)=(d-n+1)+r(d-r)$. Substituting $rg=dr+r+d-rn$, this is equivalent to $n\ge\frac{r^2+r-1}{r-1}=r+2+\frac{1}{r-1}$. When $r\ge2$ and $n\ge r+3$, this is immediate.
\end{rem}

For $r=1$, Theorem \ref{thm_r_or_g_small} can be used to recover (via simple manipulations) the explicit formulas in terms of binomial coefficients for the degrees $L_{g,1,d}$. These numbers are also determined in \cite{cps} using excess intersection on Hurwitz spaces of admissible covers.

\begin{prop}\label{prop:explicit}
For $d\geq \frac{g+2}{2}$, we have
\begin{align*}
L_{g,1,d}&=\sum_{\alpha_0+\alpha_1=2d-2-g}\int_{\Gr(2,d+1)} \sigma_1^g\cdot \sigma_{\alpha_0}\cdot \sigma_{\alpha_1}\\
&=
\sum_{i=0}^{\bigl \lfloor \frac{2d-g-2}{2}\bigr \rfloor} \frac{(2d-g-2i-1)^2}{g+1} {g+1\choose d-i}\\
&=2^g-2\sum_{i=0}^{g-d-1}\binom{g}{i}+(g-d-1)\binom{g}{g-d}+(d-g-1)\binom{g}{g-d+1}.
\end{align*}
where in the last line, we take $\binom{g}{j}=0$ when $j<0$.
\end{prop}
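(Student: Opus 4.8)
The plan is to prove Proposition \ref{prop:explicit} by starting from the Schubert-calculus formula already established in Theorem \ref{thm_r_or_g_small} for $r=1$, namely
\begin{equation*}
L_{g,1,d}=\sum_{\alpha_0+\alpha_1=2d-2-g}\int_{\Gr(2,d+1)}\sigma_1^g\cdot\sigma_{\alpha_0}\cdot\sigma_{\alpha_1},
\end{equation*}
and converting it into a closed form in two independent ways. For the first (single-sum) expression, I would recall that on $\Gr(2,d+1)$ the powers of $\sigma_1$ are governed by the ballot/Catalan combinatorics: $\sigma_1^g=\sum_a N_{g,a}\,\sigma_{a,g-a}$ where $N_{g,a}$ is the number of standard Young tableaux of the appropriate two-row shape, equivalently a difference of binomial coefficients. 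Then $\int_{\Gr(2,d+1)}\sigma_{a,g-a}\cdot\sigma_{\alpha_0}\cdot\sigma_{\alpha_1}$ is computed by Pieri: one first multiplies $\sigma_{a,g-a}\cdot\sigma_{\alpha_0}$, expands in the Schubert basis, and then pairs with $\sigma_{\alpha_1}$, which simply picks out the dual class. Carrying this out and summing over $\alpha_0+\alpha_1=2d-2-g$ collapses to a single sum indexed by $i$ (the natural variable being $i=d-1-\alpha_0$ or similar), producing the factor $\frac{(2d-g-2i-1)^2}{g+1}\binom{g+1}{d-i}$; the squared linear term is the telltale sign of a double Pieri application combined with a hook-length count of two-row tableaux.

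For the second (alternating binomial) expression, I would manipulate the single sum $\sum_{i}\frac{(2d-g-2i-1)^2}{g+1}\binom{g+1}{d-i}$ directly. The idea is to write $(2d-g-2i-1)^2/(g+1)$ as a combination of ratios that telescope against the binomial coefficient $\binom{g+1}{d-i}$ — specifically, to use the identity $\frac{(g+1-2j)^2}{g+1}\binom{g+1}{j}=(g+1)\binom{g}{j}-(g+1)\binom{g}{j-1}$-type relations (after the substitution $j=d-i$), so that most terms cancel in pairs and only boundary terms of the form $\binom{g}{g-d}$, $\binom{g}{g-d+1}$, and a truncated tail $\sum_{i=0}^{g-d-1}\binom{g}{i}$ survive. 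The full sum $\sum_{j}\frac{(g+1-2j)^2}{g+1}\binom{g+1}{j}$ over \emph{all} $j$ evaluates to $2^g$ (this is a standard moment identity for the binomial distribution, $\mathrm{Var}$-type), which supplies the leading $2^g$; the correction terms account for the fact that our sum runs only over $i\ge0$, i.e. $j\le d$, truncating the tail. Consistency with $d\ge g+1$ giving $L_{g,1,d}=2^g$ is automatic since then all correction terms vanish under the convention $\binom{g}{j}=0$ for $j<0$.

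The main obstacle I expect is the bookkeeping in the second step: matching the truncated-tail correction to exactly the stated combination $-2\sum_{i=0}^{g-d-1}\binom{g}{i}+(g-d-1)\binom{g}{g-d}+(d-g-1)\binom{g}{g-d+1}$ requires being careful about which terms are "half-counted" at the boundary of the summation range and about the parity of $2d-g$ controlling the floor in the upper limit. A clean way to handle this is to first prove the identity with the summation extended over all integers $i$ (where the Vandermonde/moment identity gives $2^g$ outright), and then subtract the contributions of $i<0$ and $i>\lfloor(2d-g-2)/2\rfloor$; the upper truncation contributes nothing because $\binom{g+1}{d-i}=0$ once $d-i>g+1$, while the lower truncation $i<0$, i.e. $j>d$, is where all the real work lies. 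I would reindex $j\mapsto g+1-j$ there to turn it into a sum over small $j$ and then apply the quadratic-to-binomial reduction once more, after which the claimed boundary terms appear directly. The hypothesis $d\ge\frac{g+2}{2}$ guarantees $2d-2-g\ge0$ so that the Schubert sum is non-empty and the formula is the honest enumerative count rather than zero; I would note this at the outset so that the manipulations take place in the intended range.
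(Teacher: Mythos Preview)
Your route to the first closed form is sound in principle, but the paper takes a shorter path. Rather than expanding $\sigma_1^g$ in the Schubert basis and then applying Pieri twice, the paper applies Giambelli to the product side: writing $\sigma_{\alpha_0}\sigma_{\alpha_1}=\sum_{b\le\min(\alpha_0,\alpha_1)}\sigma_{2d-2-g-b,\,b}$ and summing over ordered pairs with $\alpha_0+\alpha_1=2d-2-g$ gives multiplicity $2d-g-1-2b$ in front of each $\sigma_{2d-2-g-b,\,b}$. Then the known formula (cited from \cite{GH80})
\[
\int_{\Gr(2,d+1)}\sigma_{a,b}\cdot\sigma_1^g=\frac{a-b+1}{g+1}\binom{g+1}{d-b}=\binom{g}{d-b-1}-\binom{g}{d-b}
\]
supplies a \emph{second} factor $a-b+1=2d-g-1-2b$, which explains the square in line~2 immediately upon setting $i=b$. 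The same substitution, using instead the telescoping form $\binom{g}{d-b-1}-\binom{g}{d-b}$, yields line~3 after an Abel summation. Your ballot-number expansion of $\sigma_1^g$ would eventually reach the same place, but through more bookkeeping.

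Your plan for line~3 contains a real error. The variance identity gives
\[
\sum_{j=0}^{g+1}\frac{(g+1-2j)^2}{g+1}\binom{g+1}{j}=2^{g+1},
\]
not $2^g$. With $j=d-i$, the upper bound $i\le\lfloor(2d-g-2)/2\rfloor$ translates to $j\ge\lceil(g+2)/2\rceil$, which is precisely the midpoint of the $j\leftrightarrow g+1-j$ symmetry of the summand; restricting to this half is what produces $2^g$. So the ``upper truncation'' is not vacuous at all---it discards exactly half the mass. Once you correct this, the remaining truncation $j\le d$ (your $i\ge 0$) does give the correction terms as you describe, and the reindexing $j\mapsto g+1-j$ followed by the identity $\frac{g+1-2j}{g+1}\binom{g+1}{j}=\binom{g}{j}-\binom{g}{j-1}$ finishes the job. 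The paper avoids this pitfall entirely by working with the telescoping difference of binomials from the start.
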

\begin{proof} We use Giambelli's formula $\sigma_{a,b}=\sigma_a\cdot \sigma_b-\sigma_{a+1}\cdot\sigma_{b-1}\in \CH^{a+b}\bigl(\Gr(2,g+1)\bigr)$ for $a\geq b$, coupled with the formula, see e.g. \cite[p. 269]{GH80}
\begin{align*}
\int_{\Gr(2,d+1)}\sigma_{a,b}\cdot \sigma_1^g&=\frac{a-b+1}{g+1}\cdot\binom{g+1}{d-b}\\
&=\binom{g}{d-b-1}-\binom{g}{d-b}
\end{align*}
for all $a\geq b$ with $a+b=2d-2-g$. Substituting in the formula provided by Theorem \ref{thm_r_or_g_small} yields the claims.
\end{proof}

\subsection{Degrees of determinantal Schubert cycles}

We note here that comparison of the incidence correspondences given above in the proofs of Theorem \ref{thm_d_large} and \ref{thm_r_or_g_small} allows one to compute the degrees of pullbacks of Schubert cycles of low codimension on $\Gr(r+1,d+1)$ to $\bP^{(r+1)(d+1)-1}$.

\begin{prop}\label{schubert_degree}
Let $\Sigma_{\lambda}$ be a Schubert cycle of codimension $|\lambda|\le d-r$ in $\Gr(r+1,d+1)$, and let $\wt{\Sigma}_{\lambda}$ be the closure of its pullback under the rational map $\pi\colon \bP^{(r+1)(d+1)-1}\dashrightarrow \Gr(r+1,d+1)$. Then, the degree of $\wt{\Sigma}_{\lambda}$ is
\begin{equation*}
    \int_{\Gr(r+1,d+1)}\sigma_\lambda\cdot\left[ \sum_{\alpha_0+\cdots+\alpha_{r}-|\lambda|=(r+1)(d-r)-rg}\left(\prod_{i=0}^{r}\sigma_{\alpha_i}\right)\right].
\end{equation*}
\end{prop}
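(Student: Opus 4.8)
The plan is to resolve the rational map $\pi$ of \eqref{eq:pi} by the incidence correspondence
$\cX=\{([f_0,\ldots,f_r],\Lambda)\in\bP^{(r+1)(d+1)-1}\times\Gr(r+1,d+1):f_i\in\Lambda\}$
introduced in the proof of Theorem \ref{thm_r_or_g_small}, to transport the computation of $\deg\wt{\Sigma}_\lambda$ onto the product $\bP^{(r+1)(d+1)-1}\times\Gr(r+1,d+1)$, and to evaluate it there using the description of $[\cX]$ as a product of the classes $[Z_j]$ already computed in the text. Throughout write $N:=(r+1)(d+1)-1$ and let $\pi_1,\pi_2$ denote the projections from $\cX$.

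First I would record the geometry of $\cX$. The projection $\pi_2\colon\cX\to\Gr(r+1,d+1)$ is the projective bundle with fibre $\bP(\Lambda^{\oplus(r+1)})$ over $\Lambda$, so $\cX$ is irreducible of dimension $N$ and $\pi_2$ is flat; the projection $\pi_1\colon\cX\to\bP^N$ is birational and restricts to an isomorphism over the complement of the indeterminacy locus of $\pi$, which has codimension $d-r+1$. Because $|\lambda|\le d-r$, the cycle $\pi_2^{-1}(\Sigma_\lambda)$—which represents $\pi_2^*\sigma_\lambda$, by flatness of $\pi_2$—is not contained in the exceptional locus of $\pi_1$, so $\pi_1$ restricts on it to a birational morphism onto $\wt{\Sigma}_\lambda$. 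Hence $[\wt{\Sigma}_\lambda]=(\pi_1)_*\bigl(\pi_2^*\sigma_\lambda\bigr)$ in $\CH^{|\lambda|}(\bP^N)$.

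Second, I would identify $[\cX]$ inside $\bP^N\times\Gr(r+1,d+1)$. As in the construction of the cycles $Z_j$, the locus $\{f_j\in\Lambda\}$ is the zero scheme of the section of $\pi_1^*\cO_{\bP^N}(1)\otimes\pi_2^*\cQ$ obtained by composing the tautological inclusion, projection to the $j$-th coordinate, and the tautological quotient, so $[Z_j]=\sum_{i+j'=d-r}\pi_2^*(\sigma_i)\cdot\pi_1^*(H^{j'})$. Since $\cX=Z_0\cap\cdots\cap Z_r$ is the zero scheme of a section of $(\pi_1^*\cO_{\bP^N}(1)\otimes\pi_2^*\cQ)^{\oplus(r+1)}$, a bundle whose rank $(r+1)(d-r)$ equals the codimension of $\cX$, that section is regular, and therefore $[\cX]=\prod_{j=0}^{r}[Z_j]=\bigl(\sum_{i+j=d-r}\pi_2^*(\sigma_i)\pi_1^*(H^j)\bigr)^{r+1}$. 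Now $\deg\wt{\Sigma}_\lambda=\int_{\bP^N}[\wt{\Sigma}_\lambda]\cdot H^{N-|\lambda|}$, and applying the projection formula along $\pi_1$ and then along the inclusion $\cX\hookrightarrow\bP^N\times\Gr(r+1,d+1)$ turns this into $\int_{\bP^N\times\Gr(r+1,d+1)}[\cX]\cdot\pi_2^*(\sigma_\lambda)\cdot\pi_1^*(H^{N-|\lambda|})$. Substituting the formula for $[\cX]$, expanding the power into monomials $\pi_2^*(\sigma_{\alpha_0}\cdots\sigma_{\alpha_r})\cdot\pi_1^*\bigl(H^{(r+1)(d-r)-(\alpha_0+\cdots+\alpha_r)}\bigr)$, and using that a class $\pi_1^*(H^a)\cdot\pi_2^*(\beta)$ on $\bP^N\times\Gr(r+1,d+1)$ integrates to $\bigl(\int_{\bP^N}H^a\bigr)\bigl(\int_{\Gr(r+1,d+1)}\beta\bigr)$—hence contributes only when $a=N$—leaves exactly the contributions with $\alpha_0+\cdots+\alpha_r=(r+1)(d-r)-|\lambda|$, which assemble into the Schubert integral over $\Gr(r+1,d+1)$ asserted in the statement.

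The parts that require genuine checking, rather than being formal, are the identity $[\wt{\Sigma}_\lambda]=(\pi_1)_*\pi_2^*\sigma_\lambda$ and the multiplicity-one statement $[\cX]=\prod_j[Z_j]$. The latter follows from the dimension count exhibiting $\cX$ as the regular zero scheme of the rank-$(r+1)(d-r)$ bundle above. The former is where the hypothesis $|\lambda|\le d-r$ is indispensable: it is exactly what forces $\wt{\Sigma}_\lambda$ to have the expected codimension and to avoid being swallowed by the indeterminacy locus of $\pi$, and I expect this to be the main obstacle to a fully careful write-up, since it is the one step in which the rational nature of $\pi$ actually intervenes.
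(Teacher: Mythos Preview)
Your proposal is correct and follows essentially the same approach as the paper: both use the incidence correspondence $\cX\subseteq\bP^{(r+1)(d+1)-1}\times\Gr(r+1,d+1)$, identify $[\cX]=[Z_0]\cdots[Z_r]$ via the class formula for the $Z_j$ already computed, and reduce the degree computation to an integral on the product. The only cosmetic difference is that the paper phrases the key step as the scheme-theoretic equality $\pi_1^{*}(\wt{\Sigma}_\lambda)\cap\cX=\pi_2^{*}(\Sigma_\lambda)\cap\cX$, whereas you package the same fact as the pushforward identity $[\wt{\Sigma}_\lambda]=(\pi_1)_*\pi_2^*\sigma_\lambda$; these are equivalent formulations of the observation that $\pi_1|_{\pi_2^{-1}(\Sigma_\lambda)}$ is birational onto $\wt{\Sigma}_\lambda$ once $|\lambda|\le d-r$.
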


\begin{proof}
Let $N=(r+1)(d+1)-1-|\lambda|$. Recall that the codimension of $\Sigma_{\lambda}$ is strictly smaller that the codimension of the indeterminacy locus of $\pi$. Accordingly, adopting the notation of the previous two sections, we have
\begin{align*}
\deg(\wt{\Sigma}_{\lambda})&=\int_{\bP^{(r+1)(d+1)-1}}H^{N}\cdot[\wt{\Sigma}_{\lambda}]\\
&=\int_{\bP^{(r+1)(d+1)-1}\times\Gr(r+1,d+1)}\pi_{1}^{*}\bigl(H^{N}\cdot[\wt{\Sigma}_{\lambda}]\bigr)\cdot[Z_0]\cdots[Z_r]\\
&=\int_{\bP^{(r+1)(d+1)-1}\times\Gr(r+1,d+1)}\pi_{1}^{*}\bigl(H^{N}\bigr)\cdot\pi_2^{*}(\Sigma_{\lambda})\cdot[Z_0]\cdots[Z_r]\\
&=\int_{\bP^{(r+1)(d+1)-1}\times\Gr(r+1,d+1)}\pi_{1}^{*}\bigl(H^{N}\bigr)\cdot\pi_2^{*}(\Sigma_{\lambda})\cdot\left(\sum_{i+j=d-r}\pi_2^*\bigl(\sigma_i\bigr)\cdot \pi_1^*\bigl(H^j\bigr)\right)^{r+1}\\
&=\int_{\Gr(r+1,d+1)}\sigma_\lambda\cdot\left[ \sum_{\alpha_0+\cdots+\alpha_{r}=(r+1)(d-r)-rg-|\lambda|}\left(\prod_{i=0}^{r}\sigma_{\alpha_i}\right)\right],
\end{align*}
where we have used the equality
\begin{equation*}
\pi_{1}^{*}(\wt{\Sigma}_{\lambda})\cap Z_0\cap\cdots\cap Z_r=\pi_{2}^{*}(\Sigma_{\lambda})\cap Z_0\cap\cdots\cap Z_r
\end{equation*}
as \textit{subschemes} of the incidence correspondence $\cX$.
\end{proof}

\section{Young Tableaux Interpretation}\label{tableaux}

Comparison of Theorems \ref{thm_d_large} and \ref{thm_r_or_g_small} yields the following purely combinatorial statement.

\begin{prop}\label{comb_identity}
Suppose that $g\ge0,r\ge1,d\ge rg+r$, and $d$ is divisible by $r$. Then,
\begin{equation*}
    \int_{\Gr(r+1,d+1)}\sigma_{1^r}^g\cdot\left[ \sum_{\alpha_0+\cdots+\alpha_{r}=(r+1)(d-r)-rg}\left(\prod_{i=0}^{r}\sigma_{\alpha_i}\right)\right]=(r+1)^g.
\end{equation*}
\end{prop}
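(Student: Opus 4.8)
The plan is to obtain the identity for free by comparing the two geometric evaluations of $L_{g,r,d}$ already at our disposal, and then to make its combinatorial content explicit. First I would check that the hypotheses place us in the common range of Theorems \ref{thm_d_large} and \ref{thm_r_or_g_small}: the divisibility $r\mid d$ makes $n$ in \eqref{eq:n} a non-negative integer, namely $n=d+d/r+1-g$, the inequality $d\ge rg+r$ is equivalent to $n\ge d+2$, and hence both the hypothesis of Theorem \ref{thm_d_large} and the first bullet of Theorem \ref{thm_r_or_g_small} are in force. The former gives $L_{g,r,d}=(r+1)^g$ and the latter identifies $L_{g,r,d}$ with the integral on the left-hand side, so equating the two expressions yields the proposition. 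This is the whole argument, and it presents no obstacle.

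What \S\ref{tableaux} is really after, and where the interest lies, is to see this identity purely in terms of tableaux. For that I would rewrite each factor via Pieri's rule. Writing $G:=\Gr(r+1,d+1)$ and $\cQ$ for its rank $d-r$ tautological quotient bundle, we have $\sum_{a\ge0}\sigma_a=c(\cQ)$, so the bracketed sum is the degree $(r+1)(d-r)-rg$ component of $c(\cQ)^{r+1}$; iterating Pieri, its coefficient of $\sigma_\nu$ is the number $t_\nu$ of semistandard Young tableaux of shape $\nu$ with entries in $\{1,\ldots,r+1\}$. Dually, multiplication by $\sigma_{1^r}$ adjoins a vertical strip of size $r$, so by \eqref{sigma1_power} the coefficient $\beta_\lambda$ counts chains $\emptyset=\lambda^{(0)}\subsetneq\lambda^{(1)}\subsetneq\cdots\subsetneq\lambda^{(g)}=\lambda$ of size-$r$ vertical strips inside the $(r+1)\times(d-r)$ rectangle. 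Feeding this through the duality pairing $\int_G\sigma_\lambda\cdot\sigma_\nu=\delta_{\nu,\widehat\lambda}$ for complementary partitions $\lambda,\widehat\lambda$ (here $\widehat\lambda$ is obtained by reversing $\lambda$ and subtracting from $(d-r)^{r+1}$), the proposition becomes the assertion that
\[
\sum_{\substack{|\lambda|=rg\\ \lambda\subseteq(r+1)\times(d-r)}}\beta_\lambda\,t_{\widehat\lambda}=(r+1)^g .
\]

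The hard part will be proving this last identity combinatorially, without circling back to the geometry. Since $(r+1)^g$ counts words of length $g$ over an alphabet of $r+1$ letters, one seeks a bijection with pairs consisting of a vertical-strip chain for $\lambda$ together with a semistandard filling of the complement $\widehat\lambda$. The difficulty is that complementation inside the rectangle entangles the two tableaux, so the map one wants is not a plain RSK-type insertion; moreover the hypothesis $d\ge rg+r$ must be used to exclude the partitions $\lambda$ reaching the far edge of the rectangle, and it is exactly this stabilization that forces the weighted sum to be independent of $d$. A bijective or generating-function argument of this kind is the content of the combinatorial proof of Gillespie-Reimer-Berg \cite{grb}, which I would invoke (or whose stabilization step I would reproduce) rather than redevelop from scratch.
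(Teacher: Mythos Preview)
Your first paragraph is exactly the paper's proof: both sides equal $L_{g,r,d}$ by Theorems \ref{thm_d_large} and \ref{thm_r_or_g_small}, and the divisibility hypothesis ensures $n$ is an integer so that $L_{g,r,d}$ is defined. The subsequent tableaux discussion also mirrors the paper's \S\ref{tableaux}, including the deferral of a bijective proof to \cite{grb}; the only small discrepancy is that the paper observes the identity already stabilizes for $d\ge g+r$ rather than $d\ge rg+r$.
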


Indeed, both sides are equal to $L_{g,d,r}$ whenever $n=d-g+1+\frac{d}{r}$ is an integer. However, when $d\ge g+r$, both sides are independent of $d$; for the left hand side, this can be seen in terms of Schubert calculus, but will also be made transparent in the combinatorial interpretation that follows. In particular, Proposition \ref{comb_identity} holds under the weaker inequality $d\ge g+r$ with no condition on the divisibility by $r$.

We give a combinatorial interpretation of the left hand side in terms of Young Tableaux. Consider a filling of the boxes of a $(r+1)\times(d-r)$ grid with:
\begin{itemize}
    \item $rg$ red integers among \textcolor{red}{$1,2,\ldots,g$}, with each appearing exactly $r$ times, and
    \item $(r+1)(d-r)-rg$ blue integers among \textcolor{blue}{$0,1,\ldots,r$}, with each appearing any number of times,
\end{itemize}
subject to the following conditions:
\begin{itemize}
    \item the red integers are top- and left- justified, i.e., they appear above blue integers in the same column and to the left of blue integers in the same row,
    \item the red integers are strictly increasing across rows and weakly increasing down columns
    \item the blue integers are weakly increasing across rows and strictly increasing down columns.
\end{itemize}
An example filling is given in the case $(g,d,r)=(6,15,2)$ below.
\begin{center}
\begin{tabular}{ |c|c|c|c|c|c|c|c|c|c|c|c|c|}
\hline
\textcolor{red}{1} & \textcolor{red}{2} & \textcolor{red}{3} & \textcolor{red}{4} & \textcolor{red}{6} & \textcolor{blue}{0} & \textcolor{blue}{0}  &\textcolor{blue}{0} & \textcolor{blue}{0} & \textcolor{blue}{0} & \textcolor{blue}{0} & \textcolor{blue}{0} & \textcolor{blue}{0} \\
\hline
\textcolor{red}{1} & \textcolor{red}{3} & \textcolor{red}{5} & \textcolor{red}{6} &  \textcolor{blue}{0} &\textcolor{blue}{1} & \textcolor{blue}{1} & \textcolor{blue}{1} & \textcolor{blue}{1} & \textcolor{blue}{1} & \textcolor{blue}{1} & \textcolor{blue}{1} & \textcolor{blue}{1}\\
\hline
\textcolor{red}{2} & \textcolor{red}{4} & \textcolor{red}{5} & \textcolor{blue}{0} & \textcolor{blue}{2} & \textcolor{blue}{2} & \textcolor{blue}{2} & \textcolor{blue}{2} & \textcolor{blue}{2} & \textcolor{blue}{2} & \textcolor{blue}{2} & \textcolor{blue}{2} & \textcolor{blue}{2}\\
\hline
\end{tabular}
\end{center}

Note that the rightmost $d-r-g$ columns must be filled with the blue integers $0,1,\ldots,r$ in order, so a filling as above is determined by the leftmost $g$ columns, which are those that may contain red integers. In particular, the number of such fillings is independent of $d$ when $d\ge r+g$. Now, we claim that this number of fillings is given exactly by the intersection number on the left hand side of Proposition \ref{comb_identity}. Indeed, by the Pieri rule, the term $\sigma_{1^r}$ corresponds to the transposed semi-standard Young Tableau given by the red integers, and the broken strips formed by the blue entries equal to $i$ correspond to the Schubert cycle $\sigma_{\alpha_i}$.

Proposition \ref{comb_identity} therefore implies:

\begin{prop}\label{count_fillings}
Suppose $d\ge r+g$. Then, the number of fillings of a $(r+1)\times(d-r)$ grid satisfying the above conditions is equal to $(r+1)^g$.
\end{prop}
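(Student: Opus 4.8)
The plan is to identify the number of admissible fillings with the intersection number appearing on the left-hand side of Proposition \ref{comb_identity}, and then invoke that proposition. First I would reduce to a convenient value of $d$: as noted in the discussion preceding the statement, once $d\ge r+g$ the rightmost $d-r-g$ columns of any admissible filling are blue columns of full height $r+1$, hence forced to read $0,1,\ldots,r$ from top to bottom, so the number of fillings depends only on the leftmost $g$ columns and is independent of $d$ in the range $d\ge r+g$. I may therefore assume in addition that $r\mid d$ and $d\ge rg+r$ (for instance $d=r(g+1)$), which places us in the setting of Proposition \ref{comb_identity}. (The case $g=0$ is anyway trivial, both counts being $1$.)

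For such $d$, I would expand
$$
\sigma_{1^r}^{g}\cdot\left[\sum_{\alpha_0+\cdots+\alpha_r=(r+1)(d-r)-rg}\ \prod_{i=0}^{r}\sigma_{\alpha_i}\right]
$$
in the Schubert basis of $\CH^{*}\bigl(\Gr(r+1,d+1)\bigr)$ by iterated application of the Pieri rule and integrate. Since the total codimension of this class equals $\dim\Gr(r+1,d+1)=(r+1)(d-r)$, the integral is just the number of terms in the expansion, that is, the number of chains of partitions inside the $(r+1)\times(d-r)$ box
$$
\emptyset=\mu^{(0)}\subseteq\cdots\subseteq\mu^{(g)}\subseteq\nu^{(0)}\subseteq\cdots\subseteq\nu^{(r)}=(d-r)^{r+1}
$$
in which each skew shape $\mu^{(k)}/\mu^{(k-1)}$ is a vertical strip with $r$ boxes (the Pieri rule for $\sigma_{1^r}$) and each $\nu^{(i)}/\nu^{(i-1)}$ is a horizontal strip (the Pieri rule for $\sigma_{\alpha_i}$, summed over all compositions $\alpha_0+\cdots+\alpha_r=(r+1)(d-r)-rg$). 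I would then exhibit the bijection between such chains and admissible fillings: a box receives the red integer $k\in\{1,\ldots,g\}$ if it enters at step $k$ of the first phase, and the blue integer $i\in\{0,\ldots,r\}$ if it enters at step $i$ of the second phase. That the $\mu^{(k)}$ are partitions forces the red entries to be top- and left-justified and forces the blue entries to fill the complementary skew region; that each red skip is a vertical $r$-strip forces the red entries to increase strictly along rows and weakly down columns and to use each of $1,\ldots,g$ exactly $r$ times; that each blue skip is a horizontal strip forces the blue entries to increase weakly along rows and strictly down columns. Conversely an admissible filling reconstructs a unique chain. Hence the number of fillings equals the displayed intersection number, which by Proposition \ref{comb_identity} equals $(r+1)^{g}$.

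The only genuine content is this last bijection, and within it the point requiring care is the translation between vertical-strip (resp. horizontal-strip) increments of a chain of partitions and the two monotonicity patterns — in particular keeping straight the transposed-tableau convention governing the red part. Once the two-phase bookkeeping is fixed this is entirely routine, so I do not anticipate a real obstacle; the substantive input is entirely contained in Proposition \ref{comb_identity}, which in turn rests on the comparison of Theorems \ref{thm_d_large} and \ref{thm_r_or_g_small}.
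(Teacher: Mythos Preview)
Your proposal is correct and is essentially the same argument as the paper's: the paper derives Proposition~\ref{count_fillings} from Proposition~\ref{comb_identity} by exactly the Pieri/chain-of-partitions interpretation you describe, together with the observation that the count is independent of $d$ once $d\ge r+g$. You have simply made the bijection between chains and fillings more explicit than the paper does.
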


A combinatorial proof of Proposition \ref{count_fillings} via the RSK algorithm has been given by Gillespie-Reimer-Berg \cite{grb}.

\section{Variants}

\subsection{Linear series with fixed incidences and secancy conditions}\label{tev_computation}

We briefly explain how our methods also recover the more general \textit{Tevelev degrees} of \cite{cps}, where some of the points $x_i$ are constrained to lie in the same fiber of $f$. Recall from \S\ref{intro} that, if $1\le k\le n$, we defined $L'_{g,d,k}$ to be the number of morphisms $f\colon C\to\bP^1$ of degree $d$ sending general points $x_1,\ldots,x_n\in C$ to points $y_1,\ldots,y_n\in\bP^1$, where $y_1=y_2=\cdots=y_k$ but the $y_i$ are otherwise general.

\vskip 4pt

More generally, we may fix integers $0\leq a\leq k\leq d$,  a general $n$-pointed curve $(C, x_1, \ldots, x_k, x_{k+1},\ldots, x_n)$ of genus $g$, where $n$ is given by (\ref{eq:n}), and consider the variety
$$G^{r,k-a}_{d,k}(C,x_1, \ldots, x_k):=\Bigl\{\ell\in G^r_d(C): \mathrm{dim }\ \ell(-x_1-\cdots-x_{k})\geq r-k+a \Bigr\},$$
parametrizing linear systems $\ell$ whose induced map $\phi_{\ell} \colon C\dashrightarrow \bP^r$ has the property that
$$\bigl\langle \phi_{\ell}(x_1), \ldots, \phi_{\ell}(x_k)\bigr\rangle \cong \bP^{k-a-1}.$$
Then $G^{r,k-a}_{d,k}(C,x_1, \ldots, x_k)$ is a determinantal variety of dimension
$$\rho(g,r,d)-a(r+1-k+a).$$

Fixing points $y_1, \ldots, y_n\in \bP^r$ general with the property that $$\mbox{dim } \langle y_1, \ldots, y_k\rangle =k-1-a,$$ one can ask for the number of maps $f\colon C\rightarrow \bP^r$ of degree $d$ such that $f(x_i)=y_i$ for $i=1,\ldots, n$. For any such map, the corresponding linear series $\ell:=f^*\bigl|\cO_{\bP^r}(1)\bigr|$ lies in $G^{r,k-a}_{d,k}(C,x_1, \ldots, x_k)$.

In the interest of simplicity we deal only with the case
$$r=1, \ a=k-1,$$
in which case this number equals $L_{g,d,k}'$. We only sketch the proof; we refer the reader to \cite[\S 6]{celalian} for detailed proofs and more general statements.

\vskip 4pt

\begin{proof}[Proof of Theorem \ref{thm_cps}]
Consider a linear series $V$ on our general curve $C$ satisfying the needed incidence conditions. We employ a further degeneration after that of \S\ref{degeneration}, allowing $x_1,\ldots,x_k$ to coalesce onto a bubbled rational component $R_k$, attached to $R_0$ at $x$, and consider the resulting limit $V_0$ on this bubbled curve. \footnote{As explained in \cite[\S 6]{celalian}, one should more precisely consider the degeneration of the data of both $V$ and two (possibly linear dependent) sections of $V$ defining a map $f:C\to\bP^1$. We do not discuss the details here.} We find that the $R_k$-aspect of $V_0$ must have ramification sequence $(d-k,d-1)$ at $x$, and sends $x_1,\ldots,x_k$ to the same point after twisting down the base-points at $x$.

It now suffices to count linear series on $R_0$ with the aggregate ramification condition $\sigma_1^g$ at $x_0$, the new ramification condition $\sigma_{k-1}$ at $x$, an additional linear incidence condition at $x$ (with image $y_1=\cdots=y_k$), and linear incidence conditions at $x_{k+1},\ldots,x_n$. The computation of \S\ref{general_computation} yields the count
\begin{equation*}
\int_{\Gr(2,d+1)}\sigma_1^g\sigma_{k-1}\cdot\left[\sum_{i+j=2(d-1)-g-(k-1)}\sigma_{i}\sigma_j\right].
\end{equation*}

However, we find the following extraneous solutions: if the linear series in question has a base-point at $x$, then we twist down, so that the new ramification sequence is $(0,k-2)$, and $d$ decreases by 1; in addition, we lose the linear incidence condition at $x$. Therefore, we see a (zero-dimensional) excess contribution of
\begin{equation*}
\int_{\Gr(2,d)}\sigma_1^g\sigma_{k-2}\cdot\left[\sum_{i+j=2(d-2)-g-(k-2)}\sigma_{i}\sigma_j\right].
\end{equation*}
Subtracting the above yields the formula for $L'_{g,d,k}$. One needs to check that there are no additional degenerate contributions and that the intersections are transverse as before, but we omit the details.
\end{proof}

Applying the Pieri rule to the formula of Theorem \ref{thm_cps} yields the following recursions, recovering \cite[Proposition 7]{cps} after the change of coordinates $\Tev_{g,\ell,r}=L'_{g,g+\ell+1,r}$. These recursions are then used in \cite{cps} to obtain explicit formulas in terms of binomial coefficients.

\begin{cor}\label{recursion}
We have:
\begin{equation*}
    L'_{g,d,1}=L'_{g-1,d-1,1}+L'_{g-1,d,2}
\end{equation*}
and
\begin{equation*}
    L'_{g,d,k}=L'_{g-1,d-1,k-1}+L'_{g-1,d,k+1}
\end{equation*}
for $k>1$.
\end{cor}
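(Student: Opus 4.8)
The plan is to derive Corollary \ref{recursion} purely combinatorially from the formula in Theorem \ref{thm_cps} by applying the Pieri rule to each of the two terms. Recall that for the Grassmannian $\Gr(2,m)$, the class $\sigma_1^g$ satisfies $\sigma_1^g\cdot\sigma_a = \sigma_1^{g-1}\cdot(\sigma_{a-1}\sigma_1)\cdot\sigma_1^{\text{something}}$... more directly, one uses the Pieri expansion $\sigma_1\cdot\sigma_{a,b} = \sigma_{a+1,b}+\sigma_{a,b+1}$ (with the usual conventions on when a term vanishes) to peel off a single factor of $\sigma_1$ from $\sigma_1^g$, writing $\sigma_1^g = \sigma_1\cdot\sigma_1^{g-1}$. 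The key observation is that after using one $\sigma_1$, the intersection number on $\Gr(2,d+1)$ of $\sigma_1^{g-1}\cdot(\text{rest})$ should match, term by term, the two contributions $L'_{g-1,d-1,\cdot}$ and $L'_{g-1,d,\cdot}$ appearing on the right-hand side.

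The concrete steps I would carry out are as follows. First, I would write $L'_{g,d,k}$ via Theorem \ref{thm_cps} as a difference $A_{g,d,k}-B_{g,d,k}$, where $A_{g,d,k}=\int_{\Gr(2,d+1)}\sigma_1^g\sigma_{k-1}\cdot[\sum_{i+j=2(d-1)-g-(k-1)}\sigma_i\sigma_j]$ and $B_{g,d,k}=\int_{\Gr(2,d)}\sigma_1^g\sigma_{k-2}\cdot[\sum_{i+j=2(d-2)-g-(k-2)}\sigma_i\sigma_j]$. Second, for the term $A_{g,d,k}$, I would split off one copy of $\sigma_1$ and use Pieri: $\sigma_1\cdot\sigma_{k-1}=\sigma_k+\sigma_{k-1,1}$, and on $\Gr(2,d+1)$, $\sigma_{k-1,1}=\sigma_{k-1}\sigma_1-\sigma_k$ by Giambelli, so that multiplication by $\sigma_1$ on $\Gr(2,d+1)$ can be reorganized. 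More efficient is to use that $\sigma_1$ acts on the sum $\sum_{i+j=N}\sigma_i\sigma_j$ by raising $N$ by one in a controlled way; one then matches the resulting terms against $A_{g-1,d,k+1}$ and (after recognizing the hyperplane/quotient bundle shift that relates $\Gr(2,d+1)$ and $\Gr(2,d)$) against $A_{g-1,d-1,k-1}$. Third, I would do the analogous bookkeeping for $B_{g,d,k}$ and verify that the leftover terms assemble exactly into $B_{g-1,d-1,k-1}+B_{g-1,d,k+1}$ (with the boundary convention that $B$ vanishes when the lower index is too small, matching the $k=1$ versus $k>1$ dichotomy). Fourth, subtracting gives the two stated recursions; the $k=1$ case is special only because $B_{g,d,1}=0$, which forces the first term of the recursion to drop the "$-1$ in $k$" branch.

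The cleanest route, and the one I would actually write up, is to interpret everything via the Young-tableaux / lattice-path model implicit in \S\ref{tableaux} and in \cite{cps}: the intersection number $A_{g,d,k}$ counts certain fillings, and removing a $\sigma_1$ corresponds to deleting the box containing the largest red label, which either sits in the first column (contributing the $d\mapsto d-1$, $k\mapsto k-1$ branch after the Grassmannian shift) or not (contributing the $k\mapsto k+1$ branch). This makes the two-term recursion structurally obvious rather than a brute-force Pieri computation. The main obstacle is purely bookkeeping: keeping the shifts in $d$, $g$, and $k$ consistent across the two ambient Grassmannians $\Gr(2,d+1)$ and $\Gr(2,d)$, and checking that the range $2(d-1)-g-(k-1)$ transforms correctly under $g\mapsto g-1$ together with either $d\mapsto d-1,k\mapsto k-1$ or $d\mapsto d,k\mapsto k+1$; one must also confirm that no terms are lost or double-counted at the boundary of the summation range, which is exactly where the convention "$B$-term is zero when $k=1$" enters. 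Once the indices are pinned down, the identity is immediate, and comparing with \cite[Proposition 7]{cps} under $\Tev_{g,\ell,r}=L'_{g,g+\ell+1,r}$ finishes the proof.
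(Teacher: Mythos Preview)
Your approach is correct and is exactly what the paper does: it states only that the recursion follows by ``applying the Pieri rule to the formula of Theorem \ref{thm_cps},'' and your plan of peeling off one $\sigma_1$ via $\sigma_1\cdot\sigma_{k-1}=\sigma_k+\sigma_{k-1,1}$, then identifying the $\sigma_{k-1,1}$ piece on $\Gr(2,d+1)$ with the corresponding integral on $\Gr(2,d)$ (using $\sigma_{k-1,1}=\sigma_{1,1}\cdot\sigma_{k-2}$), carries this out in detail. The only point to tighten is the boundary bookkeeping you flag yourself: once you note $B_{g,d,k}=A_{g,d-1,k-1}$, the $B$-recursion follows formally from the $A$-recursion, and the $k=1$ case drops out because $B_{g,d,1}=0$ while $B_{g-1,d,2}=A_{g-1,d-1,1}=L'_{g-1,d-1,1}$.
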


\begin{rem}\label{cps_const}
The proof of Theorem \ref{thm_d_large} may also be employed to show that $L'_{g,d,k}=2^g$ whenever $n\ge d+k+1$. For general $r$, the number of linear series in question is $(r+1)^g$ whenever $n\ge d+a+2$.

However, even when $r=1$, the proof of Theorem \ref{thm_r_or_g_small} breaks down as soon as $k>1$, as we will see contributions from constant maps with value $y_1=\cdots=y_k$ and base-points at $x_{k+1}=\cdots=x_n$. Thus, the additional degeneration as above is needed to obtain the general formula for $L'_{g,d,k}$.
\end{rem}

\subsection{Linear series with imposed incidences and prescribed ramification}\label{ram_conditions}

We fix a general pointed curve $[C, p_1,\ldots, p_m, x_1,\ldots,x_n]\in\cM_{g,m+n}$, general points  $y_1,\ldots,y_n\in\bP^r$, as well as $m$ partitions $\lambda_1, \ldots, \lambda_m$ of length $r+1$. We may consider morphisms $f\colon C\to\bP^r$ of degree $d$ satisfying $f(x_i)=y_i$ for $i=1, \ldots, n$ and $f$ has ramification sequence at least $\overline{\lambda_j}$ at $p_j$ for $j=1, \ldots, m$. Suppose for simplicity that the $(r+1)$-st part of each $\lambda_j$ is zero, so that $f$ has no base points. Equivalently, like in (\ref{eq:evaluation}) we can consider the evaluation map
\begin{equation}\label{eq:ramif}
\mathrm{ev}_{(x_1, \ldots, x_n)}\colon G^r_d\Bigl(C, \bigl(p_1, \overline{\lambda}_1\bigr), \ldots, \bigl(p_m, \overline{\lambda}_m)\Bigr)\dashrightarrow P_r^n,
\end{equation}
and ask for its degree when the dimension of the two varieties in question are equal. Using (\ref{adjBN}), we expect a finite number of such maps $f\colon C\rightarrow \bP^r$ whenever
$\rho(g,r,d, \overline{\lambda}_1, \ldots, \overline{\lambda}_m)=rn-(r^2+2r)$, that is, when
\begin{equation}\label{nramif}
    n=\frac{dr+d+r-\lambda_{\tot}-gr}{r},
\end{equation}
where $\lambda_{\tot}:=|\lambda_1|+\cdots +|\lambda_m|$ is the total size of the partitions $\lambda_j$. Let $L_{g,r,d}^{\lambda_1,\ldots,\lambda_m}$ be this number, that is, the degree of the map given by (\ref{eq:ramif}).

\vskip 5pt

Degenerating the general genus $g$ curve $C$ to a flag curve as in \S\ref{degeneration} so that the points $p_1, \ldots, p_m$ specialize to general points on the component $R_{\spine}$, whereas $x_1, \ldots, x_n$ specialize, as before, to general points of the rational component $R_0$, we reduce the computation to the numbers $L_{g,r,d,\lambda}$, as defined in Definition \ref{ls_count_lambda}, where now $|\lambda|=rg+\lambda_{\tot}$. Following the proof of Theorem \ref{thm_d_large}, we obtain the following result.

\begin{prop}\label{thm_add_ram1}
Suppose that $d\ge rg+r+\lambda_{\tot}$, or equivalently $n\ge d+2$. Then,
\begin{equation*}
L_{g,r,d}^{\lambda_1,\ldots,\lambda_m}=(r+1)^g\cdot \prod_{j=1}^{m}\deg(\wt{\Sigma}_{\lambda_j}),
\end{equation*}
where $\deg(\wt{\Sigma}_{\lambda_j})$ is the degree of the cycle $\wt{\Sigma}_{\lambda_j}$ on $\bP^{(r+1)(d+1)-1}$ obtained by taking the closure of the pullback of $\Sigma_{\lambda_j}(x_0)\subseteq \Gr(r+1,d+1)$ under the rational map $\pi\colon \bP^{(r+1)(d+1)-1}\dashrightarrow \Gr(r+1,d+1)$, see Proposition \ref{schubert_degree}.
\end{prop}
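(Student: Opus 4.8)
The plan is to combine the degeneration of \S\ref{degeneration} with the ramification points $p_1,\ldots,p_m$ placed on the spine, and then run the transversality argument of \S\ref{main_computation} almost verbatim. First I would degenerate $C$ to the flag curve $C_0$ of \S\ref{degeneration}, but now with the additional marked points $p_1,\ldots,p_m$ specializing to general points of $R_{\spine}$, distinct from $x_0$ and the $p_i$ at which the elliptic tails are attached. By the additivity of the Brill--Noether number, the same reasoning as in \S\ref{degeneration} forces each elliptic-tail aspect to have ramification $(d-r-1,\ldots,d-r-1,d-r)$ at its node (so $\ell_{R_{\spine}}$ has a simple cusp there), and now the $R_{\spine}$-aspect must in addition satisfy $\alpha^{\ell_{R_{\spine}}}(p_j)\ge\overline{\lambda}_j$. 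The dimension bookkeeping changes only in that the total imposed ramification on the spine is now $rg+\lambda_{\tot}$, so that $\sum_j\alpha^{\ell_{R_0}}_j(x_0)=rg+\lambda_{\tot}$. Writing $\mu:=\overline{\alpha^{\ell_{R_{\spine}}}(x_0)}$ and $\lambda$ for its complement in $(d-r)^{r+1}$, as before $\alpha^{\ell_{R_0}}(x_0)=\overline{\lambda}$ with $|\lambda|=rg+\lambda_{\tot}$.

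Next I would count the admissible aspects on $R_{\spine}$: these are linear series with a cusp at each $p_i$ (contributing $\sigma_{1^r}^g$), ramification at least $\overline{\lambda}_j$ at $p_j$ (contributing $\sigma_{\lambda_j}$), and ramification exactly $\overline{\mu}$ at $x_0$. By the genus-zero transversality of Eisenbud--Harris \cite{eh_genus0} (see also \cite{mtv}), for general choices of the points this number is $\int_{\Gr(r+1,d+1)}\sigma_{1^r}^g\cdot\prod_{j=1}^m\sigma_{\lambda_j}\cdot\sigma_\mu$. Expanding $\sigma_{1^r}^g\cdot\prod_j\sigma_{\lambda_j}=\sum_{|\lambda|=rg+\lambda_{\tot}}\beta_\lambda\,\sigma_\lambda$ (this defines the new $\beta_\lambda$), the degeneration formula of Proposition \ref{degen_formula} becomes $L_{g,r,d}^{\lambda_1,\ldots,\lambda_m}=\sum_{|\lambda|=rg+\lambda_{\tot}}\beta_\lambda\,L_{g,r,d,\lambda}$, where $L_{g,r,d,\lambda}$ is exactly the genus-zero count of Definition \ref{ls_count_lambda}. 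Then I would invoke the proof of Theorem \ref{thm_d_large}: under the hypothesis $d\ge rg+r+\lambda_{\tot}$, the codimension $|\lambda|=rg+\lambda_{\tot}$ of the relevant cycles is still strictly less than $d-r+1$, so each $\wt{\sigma}_\lambda$ is well-defined on $\bP^{(r+1)(d+1)-1}$, and the same parameter counts as in Lemma \ref{intersection_generic} rule out base points and excess degeneracy (the inequality needed is again $n\ge d+2$, which is equivalent to $d\ge rg+r+\lambda_{\tot}$ via \eqref{nramif}); transversality follows as in Lemma \ref{intersection_transverse}. Hence $L_{g,r,d,\lambda}=\LL\cdot\wt{\sigma}_\lambda$ and $L_{g,r,d}^{\lambda_1,\ldots,\lambda_m}=\LL\cdot\bigl(\wt{\sigma}_{1^r}^g\cdot\prod_j\wt{\sigma}_{\lambda_j}\bigr)=\deg\bigl(\wt{\sigma}_{1^r}^g\cdot\prod_j\wt{\sigma}_{\lambda_j}\bigr)$.

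Finally I would evaluate this degree. Since $\LL$ imposes $nr$ linear conditions and $(r+1)(d+1)-1-nr=|\lambda|=rg+\lambda_{\tot}$, the number is the degree of $\wt{\sigma}_{1^r}^g\cdot\prod_j\wt{\sigma}_{\lambda_j}$ in $\bP^{(r+1)(d+1)-1}$. Here I would use the multiplicativity of degree under the incidence-correspondence/$Z_j$ construction of Proposition \ref{schubert_degree}: because $\deg(\wt\sigma_{1^r})=r+1$ (computed via the Segre embedding $\bP^1\times\bP^r\hookrightarrow\bP^{2r+1}$ in the proof of Theorem \ref{thm_d_large}) and, more generally, $\deg(\wt\Sigma_{\lambda_j})$ is as given in Proposition \ref{schubert_degree}, the total degree factors as $(r+1)^g\cdot\prod_{j=1}^m\deg(\wt\Sigma_{\lambda_j})$. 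The one point requiring care — and the main obstacle — is to justify this factorization rigorously: one must check that the cycles $\wt\Sigma_{\lambda_1}(x_0),\ldots,\wt\Sigma_{\lambda_m}(x_0)$ together with the $g$ copies of $\wt\Sigma_{1^r}$ and the linear space $\LL$ meet in the expected dimension and transversally, away from the common indeterminacy locus of $\pi$. This is handled exactly as in Lemmas \ref{intersection_generic}--\ref{intersection_transverse}: since the $p_j$ and $x_0$ are general and distinct, twisting down any hypothetical base point increases codimension faster than it relaxes conditions, and the smooth map $\chi$ of Lemma \ref{intersection_transverse} upgrades to remember the images and ramification data at all of $x_0,p_1,\ldots,p_m,x_1,\ldots,x_n$, so that genericity of the marked points forces transversality. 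With transversality in hand the degree is literally the product of degrees of the individual pullback cycles, which is the claimed formula.
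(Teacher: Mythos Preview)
Your proposal is correct and follows essentially the same route as the paper: specialize $p_1,\ldots,p_m$ to the spine so that the total ramification transmitted to $x_0$ on $R_0$ becomes $rg+\lambda_{\tot}$, and then rerun the proof of Theorem~\ref{thm_d_large} with this larger codimension, which still sits below the indeterminacy locus under the hypothesis $d\ge rg+r+\lambda_{\tot}$. The one place you overcomplicate matters is the final paragraph: once you have $L_{g,r,d}^{\lambda_1,\ldots,\lambda_m}=\LL\cdot\bigl(\wt{\sigma}_{1^r}^g\cdot\prod_j\wt{\sigma}_{\lambda_j}\bigr)$ as a class identity in $\CH^*(\bP^{(r+1)(d+1)-1})\cong\bZ[H]/(H^{N+1})$, the factorization $(r+1)^g\prod_j\deg(\wt{\Sigma}_{\lambda_j})$ is immediate because each $\wt{\sigma}_\mu$ equals $\deg(\wt{\Sigma}_\mu)\cdot H^{|\mu|}$---no further geometric transversality of the individual $\wt{\Sigma}_{\lambda_j}(x_0)$ is needed (and indeed after your degeneration there is only the single cycle $\wt{\Sigma}_\lambda(x_0)$ on $R_0$, not several).
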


\vskip 3pt

Similarly, closely following the proof of Theorem \ref{thm_r_or_g_small}, we obtain:

\begin{prop}\label{thm_add_ram2}
Suppose that:
\begin{itemize}
\item $d\ge rg+r+\lambda_{\tot}$,
\item $n=r+2$, or
\item $r=1$.
\end{itemize}

Then,
\begin{equation*}
L_{g,r,d}^{\lambda_1,\ldots,\lambda_m}=\int_{\Gr(r+1,d+1)}\sigma_{1^r}^{g}\cdot\prod_{j=1}^{m}\sigma_{\lambda_j}\cdot\left[ \sum_{\alpha_0+\cdots+\alpha_{r}=(r+1)(d-r)-rg-\lambda_{\tot}}\left(\prod_{i=0}^{r}\sigma_{\alpha_i}\right)\right].
\end{equation*}
\end{prop}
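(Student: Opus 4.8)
\textbf{Proof proposal for Proposition \ref{thm_add_ram2}.}

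The plan is to mirror the proof of Theorem \ref{thm_r_or_g_small} almost verbatim, inserting the extra ramification conditions at $p_1,\ldots,p_m$ as additional Schubert-cycle conditions pulled back from the Grassmannian factor. As explained in the paragraph preceding the statement, the degeneration of $\S\ref{degeneration}$ carries over once we let the points $p_j$ specialize to general points of $R_{\spine}$: on each elliptic tail $E_i$ we are still forced to have $\ell_{E_i}=(d-r-1)p_i+|(r+1)p_i|$, so that $\ell_{R_{\spine}}$ has a simple cusp at each $p_i$ and in addition ramification sequence $\overline{\lambda_j}$ at $p_j$, while the total ramification of $\ell_{R_0}$ at $x_0$ becomes $rg+\lambda_{\tot}$. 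The count of $R_{\spine}$-aspects with these prescribed ramifications, transverse by \cite{eh_genus0}, is $\int_{\Gr(r+1,d+1)}\sigma_{1^r}^g\cdot\prod_{j}\sigma_{\lambda_j}\cdot\sigma_\mu$ where $\mu=\overline{\alpha^{\ell_{R_{\spine}}}(x_0)}$, and so the analogue of $(\ref{sigma1_power})$ reads $\sigma_{1^r}^g\cdot\prod_j\sigma_{\lambda_j}=\sum_{|\lambda|=rg+\lambda_{\tot}}\beta_\lambda\sigma_\lambda$; the reduction of Proposition \ref{degen_formula} then gives $L_{g,r,d}^{\lambda_1,\ldots,\lambda_m}=\sum_{|\lambda|=rg+\lambda_{\tot}}\beta_\lambda L_{g,r,d,\lambda}$, with $L_{g,r,d,\lambda}$ as in Definition \ref{ls_count_lambda}.

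Next I would set up the refined incidence correspondence on $\bP^{(r+1)(d+1)-1}\times\Gr(r+1,d+1)$ exactly as in $\S\ref{general_computation}$: the $nr$ linear conditions $f(x_i)=y_i$ pulled back from the projective space factor, the class $\pi_2^*(\sigma_\lambda)$, and the $r+1$ classes $Z_0,\ldots,Z_r$ of class $\sum_{i+j=d-r}\pi_2^*(\sigma_i)\pi_1^*(H^j)$. The only new feature is that $|\lambda|=rg+\lambda_{\tot}$ rather than $rg$, which shifts the summation index in the final integral. One must check that the intersection avoids the indeterminacy of the $\rho_j$, i.e.\ that no intersection point corresponds to a degenerate map $f\colon\bP^1\to\bP^{r'}$ with $r'<r$; the parameter count in Lemma \ref{intersection_generic} goes through under the hypothesis $d\ge rg+r+\lambda_{\tot}$ essentially unchanged (the ramification at the $p_j$ only cuts down dimension further), and under $n=r+2$ or $r=1$ the inequalities in the proof of Theorem \ref{thm_r_or_g_small} must be re-derived with $rg$ replaced by $rg+\lambda_{\tot}$; since $\lambda_{\tot}\ge0$ these only strengthen the contradictions obtained there. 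Transversality follows by the same argument as Lemma \ref{intersection_transverse}, now pulling back the locus of linear series with ramification exactly $\overline{\lambda_j}$ at $p_j$ and exactly $\overline{\lambda}$ at $x_0$ under the smooth map $\chi$. The degree of the intersection cycle is then
\begin{equation*}
\int_{\bP^{(r+1)(d+1)-1}\times\Gr(r+1,d+1)}\pi_2^*\Bigl(\sigma_{1^r}^g\cdot\prod_{j=1}^m\sigma_{\lambda_j}\Bigr)\cdot\pi_1^*(H^{nr})\cdot\Bigl(\sum_{i+j=d-r}\pi_2^*(\sigma_i)\pi_1^*(H^j)\Bigr)^{r+1},
\end{equation*}
which upon pushing forward along $\pi_1$ (extracting the coefficient of $H^{(r+1)(d+1)-1}$) yields the claimed formula with the summation $\alpha_0+\cdots+\alpha_r=(r+1)(d-r)-rg-\lambda_{\tot}$.

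The main obstacle, as in Theorem \ref{thm_r_or_g_small}, is ruling out excess contributions from degenerate maps; the extra ramification conditions at the $p_j$ do not create new such contributions (they live on a different component of the flag curve from the $x_i$, and only impose further closed conditions on $\Lambda$), so the case analysis in Lemma \ref{intersection_generic} and in the proof of Theorem \ref{thm_r_or_g_small} adapts with $rg\rightsquigarrow rg+\lambda_{\tot}$. I would also take care, as the excerpt does, with constant maps: under $d\ge rg+r+\lambda_{\tot}$ the argument of Remark \ref{const_general} shows $\LL$ meets no such locus, and under $n=r+2$ or $r=1$ the strengthened inequalities close off Remark \ref{const_higherrank}'s loophole. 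Everything else is a routine repackaging of $\S\ref{main_computation}$, so I would present the proof by indicating these modifications rather than rewriting the lemmas in full.
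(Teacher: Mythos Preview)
Your proposal is correct and follows essentially the same approach as the paper, which proves the proposition in a single sentence by noting that the only modification to the proof of Theorem \ref{thm_r_or_g_small} is that the total ramification imposed at $x_0$ becomes $rg+\lambda_{\tot}$ rather than $rg$. One small sharpening: your justification that ``$\lambda_{\tot}\ge0$ only strengthens the contradictions'' is not quite the point---the paper observes instead that $rg+\lambda_{\tot}=dr+d+r-nr$ holds in \emph{both} the unramified and ramified settings (compare \eqref{eq:n} and \eqref{nramif}), so every inequality in the proof of Theorem \ref{thm_r_or_g_small}, being expressible in terms of $n,d,r$ and this quantity, carries over literally unchanged rather than merely surviving a one-sided shift.
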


Indeed, in both results, the only significant modification is that the total ramification imposed at $x_0$ after degeneration is $rg+\lambda_{\tot}$, instead of $rg$. However, this number is equal to $dr+d+r-nr$ in both cases, and from here the proofs go through without change.

\end{document}